\numberwithin{equation}{section}
\theoremstyle{plain}
 \newtheorem{thm}{Theorem}[section]
 \newtheorem{lem}[thm]{Lemma}
 \newtheorem{prop}[thm]{Proposition}
\theoremstyle{definition}
\theoremstyle{remark}
 \newtheorem{ex}[thm]{Example}
\begin{document}
\allowdisplaybreaks

\begin{flushleft}
{\Large {\bf Description of limits of ranges of iterations of 
stochastic integral mappings of infinitely divisible distributions}}

\bigskip
{\bf Ken-iti Sato}

\bigskip
{\small Hachiman-yama 1101-5-103, Tenpaku-ku, Nagoya, 468-0074 Japan\\
{\it E-mail address}:  ken-iti.sato@nifty.ne.jp\\
{\it URL}: http://ksato.jp/}
\end{flushleft}

\bigskip
\noindent {\bf Abstract}. For  infinitely divisible distributions $\rho$ on 
$\mathbb{R}^d$ the stochastic integral mapping $\Phi_f\,\rho$ is defined 
as the distribution of improper stochastic integral $\int_0^{\infty-}
f(s) dX_s^{(\rho)}$, where $f(s)$ is a non-random function and
$\{ X_s^{(\rho)}\}$ is a L\'evy process on $\mathbb{R}^d$ with
distribution $\rho$ at time $1$.  For three families of functions $f$
with parameters, the limits of the nested sequences of the ranges
of the iterations $\Phi_f^n$ are shown to be some subclasses, with
explicit description, of
the class $L_{\infty}$ of completely selfdecomposable distributions.
In the critical case of parameter $1$, the notion of weak mean $0$ plays
an important role.  Examples of $f$ with different limits of the ranges 
of $\Phi_f^n$ are also given.

\medskip
\noindent {\small {\it Short title}.  Limits of ranges of iterations of 
stochastic integral maps\\
{\it 2010 Mathematics Subject Classification}. 60E07, 60G51, 60H05.\\
{\it Key words and phrases}. completely selfdecomposable distribution, 
infinitely divisible distribution, L\'evy process, selfdecomposable 
distribution, stochastic integral mapping.}

\bigskip
\section{Introduction}

Let $ID=ID(\mathbb{R}^d)$ be the class of  infinitely divisible 
distributions on $\mathbb{R}^d$,
where $d$ is a fixed finite dimension.  For a real-valued locally 
square-integrable function $f(s)$ on $\mathbb{R}_+=[0,\infty)$, let
\[
\Phi_f\,\rho=\mathcal L\left(\int_0^{\infty-} f(s)dX_s^{(\rho)}\right),
\]
the law of the improper stochastic integral $\int_0^{\infty-} f(s)
dX_s^{(\rho)}$ 
with respect to the L\'evy process $\{ X_s^{(\rho)}\colon s\geq0\}$ 
on $\mathbb{R}^d$ with 
$\mathcal L(X_1^{(\rho)})=\rho$.  This integral is the limit in 
probability of $\int_0^t f(s)dX_s^{(\rho)}$ as $t\to\infty$.  The domain 
of $\Phi_f$, denoted by
$\mathfrak D(\Phi_f)$, is the class of $\rho\in ID$ such that this 
limit exists.
The range of $\Phi_f$ is denoted by $\mathfrak R(\Phi_f)$.
If $f(s)=0$ for $s\in (s_0,\infty)$, then $\Phi_f\,\rho=\mathcal L
\big (\int_0^{s_0} f(s)
dX_s^{(\rho)}\big )$ and  $\mathfrak D(\Phi_f)=ID$.  For many choices 
of $f$, the
description of $\mathfrak R(\Phi_f)$ is known; they are quite diverse.
A seminal example is $\mathfrak R(\Phi_f)=L=L(\mathbb{R}^d)$, the class 
of selfdecomposable
distributions on $\mathbb{R}^d$, for $f(s)=e^{-s}$ (Wolfe (1982), 
Sato (1999), Rocha-Arteaga
and Sato (2003)).
The iteration $\Phi_f^n$ is defined by $\Phi_f^1=\Phi_f$ and 
$\Phi_f^{n+1}\rho=\Phi_f(\Phi_f^n\rho)$
with $\mathfrak D(\Phi_f^{n+1})=\{\rho\in\mathfrak D(\Phi_f^n)\colon 
\Phi_f^n\,\rho\in\mathfrak D(\Phi_f)\}$. 
Then
\[
ID\supset \mathfrak R(\Phi_f)\supset \mathfrak R(\Phi_f^2)\supset \cdots .
\]
We define the limit class
\[
\mathfrak R_{\infty}(\Phi_f)=\bigcap_{n=1}^{\infty} \mathfrak R(\Phi_f^n).
\]
If $f(s)=e^{-s}$, then $\mathfrak R(\Phi_f^n)$ is the class of
$n$ times selfdecomposable distributions and $\mathfrak R_{\infty}(\Phi_f)$ is
the class $L_{\infty}$ of completely selfdecomposable distributions,
which is the smallest class that is closed under convolution and weak 
convergence and contains all stable distributions on $\mathbb{R}^d$.  
This sequence and the class $L_{\infty}$ were introduced by 
Urbanik (1973) and studied by Sato (1980) and others.
If $f(s)=(1-s) 1_{[0,1]}(s)$, then $\mathfrak R_{\infty}(\Phi_f)=L_{\infty}$,  
which was established
by Jurek (2004) and Maejima and Sato (2009); in this case $\mathfrak R(\Phi_f)$ 
is the class of
$s$-selfdecomposable distributions in the terminology of Jurek (1985). 
The paper of Maejima and Sato (2009) showed $\mathfrak R_{\infty}(\Phi_f)
=L_{\infty}$ in many cases
including $(1)$ $f(s)=(-\log s) 1_{[0,1]}(s)$,  $(2)$  $s=\int_{f(s)}^{\infty}
u^{-1} e^{-u} du$ $(0<s<\infty)$,  $(3)$  $s=\int_{f(s)}^{\infty} e^{-u^2} du$  
$(0<s<s_0=\sqrt{\pi}/2)$.
The classes $\mathfrak R(\Phi_f)$ corresponding to $(1)$--$(3)$ are the
Goldie--Steutel--Bondesson class $B$, the Thorin class $T$ (see 
Barndorff-Nielsen et al.\ (2006)),  
and the class $G$ of generalized type $G$ distributions, respectively.
These results pose a problem what classes other than $L_{\infty}$ can appear
as $\mathfrak R_{\infty}(\Phi_f)$ in general.  

For  $-\infty<\alpha<2$, $p>0$, and $q>0$, we consider the three families of
functions $\bar f_{p,\alpha}(s)$, $l_{q,\alpha}(s)$, and $f_{\alpha}(s)$ as in 
[S] (we refer to Sato (2010) as [S]).  
We define $\bar\Phi_{p,\alpha}$, $\Lambda_{q,\alpha}$, and $\Psi_{\alpha}$
to be the mappings $\Phi_f$ with $f(s)$ equal to these functions, respectively. 
In this paper we will prove the following theorem on the classes 
$\mathfrak R_{\infty}(\Phi_f)$ of those mappings.  The case $\alpha=1$ is
delicate.  There the notion of weak mean $0$ plays
an important role.

\begin{thm}\label{t1a}
{\rm(i)}  If $\alpha\leq0$, $p\geq1$, and $q>0$, then
\[
\mathfrak R_{\infty}(\bar\Phi_{p,\alpha})=\mathfrak R_{\infty}
(\Lambda_{q,\alpha})=\mathfrak R_{\infty}(\Psi_{\alpha})
=L_{\infty}.
\]
{\rm(ii)}  If\/ $0<\alpha<1$, $p\geq1$, and $q>0$, then
\[
\mathfrak R_{\infty}(\bar\Phi_{p,\alpha})=\mathfrak R_{\infty}
(\Lambda_{q,\alpha})=\mathfrak R_{\infty}(\Psi_{\alpha})=L_{\infty}^{(\alpha,2)}.
\]
{\rm(iii)}   If $\alpha=1$, $p\geq1$, and $q=1$, then
\[
\mathfrak R_{\infty}(\bar\Phi_{p,1})=\mathfrak R_{\infty}(\Lambda_{1,1})
=\mathfrak R_{\infty}(\Psi_1)=L_{\infty}^{(1,2)}
\cap\{\mu\in ID\colon \text{$\mu$ has weak mean $0$} \}.
\]
{\rm(iv)}   If\/ $1<\alpha<2$, $p\geq1$, and $q>0$, then
\[
\mathfrak R_{\infty}(\bar\Phi_{p,\alpha})=\mathfrak R_{\infty}
(\Lambda_{q,\alpha})=\mathfrak R_{\infty}(\Psi_{\alpha})
=L_{\infty}^{(\alpha,2)}\cap\{\mu\in ID\colon \text{$\mu$ has mean $0$}\}.
\]
\end{thm}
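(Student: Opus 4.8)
The plan is to reduce the whole statement to an analysis at the level of Lévy--Khintchine triplets. Writing $\mu\in ID$ via its Gaussian covariance $A_\mu$, a drift vector, and a Lévy measure in polar form $\nu_\mu(dr\,d\xi)=\lambda(d\xi)\,g_\xi(r)\,r^{-1}dr$, the mapping $\Phi_f$ (for $f$ one of $\bar f_{p,\alpha}$, $l_{q,\alpha}$, $f_\alpha$) acts by multiplying $A_\mu$ by $\int_0^\infty f(s)^2\,ds$, transforming the radial profiles $g_\xi$ by an integral operator $\mathcal K_f$ of fractional-integration type coming from the substitution $r\mapsto f(s)r$, and transforming the drift affinely, where the relevant centering function in the representation is the one appropriate to $\alpha<1$, $\alpha=1$, or $\alpha>1$. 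All of this --- including the precise domain conditions $\mathfrak D(\Phi_f)$ and the one-step ranges $\mathfrak R(\Phi_f)$ --- I take from [S]. Iterating, $\mathfrak R(\Phi_f^n)$ is described by: its members are exactly the $\mu$ whose radial profiles lie in $\mathcal K_f^n(\text{admissible profiles})$, with the drift an $n$-fold affine image and $A_\mu$ an arbitrary (rescaled) nonnegative-definite matrix.

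Second, I would compute $\bigcap_{n\ge1}\mathfrak R(\Phi_f^n)$ directly from this description. The structural point is that for each of the three families $\mathcal K_f$ is, after the obvious change of variables, a version of the operator whose $n$-fold iterates single out, in the limit, precisely the completely monotone radial profiles --- the mechanism behind $L_\infty=\bigcap_n\mathfrak R(\Phi_f^n)$ for $f(s)=e^{-s}$ --- so a profile survives every iteration iff it is completely monotone in $r$, i.e. iff $\mu\in L_\infty$. What distinguishes the cases is the integrability constraint that ``admissible profile'' imposes near $r=\infty$; this is equivalent to asking which stable Lévy measures $r\mapsto r^{-\beta}$, $\beta\in(0,2)$, remain admissible under arbitrarily many iterations of $\Phi_f$. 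For $\alpha\le0$ every $\beta\in(0,2)$ is retained and one gets all of $L_\infty$; for $0<\alpha<2$ only $\beta\in(\alpha,2)$ (together with the Gaussian) is retained, which is exactly the feature defining $L_\infty^{(\alpha,2)}$. The Gaussian part never obstructs. The drift is where the split between (iii) and (iv) originates: tracking the affine action of $\Phi_f^n$ on the drift and letting $n\to\infty$ forces the compensating integral in the centering to converge --- for $1<\alpha<2$ as an ordinary limit, i.e. $\mu$ has mean $0$; for $\alpha=1$ only in an averaged sense, i.e. $\mu$ has weak mean $0$.

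Third, the two inclusions. The inclusion $\mathfrak R_{\infty}(\Phi_f)\subseteq(\text{claimed class})$ is the intersection computation just described together with the one-step facts from [S]. For the reverse inclusion I would use the characterizations of the target classes as smallest classes closed under convolution and weak convergence: $L_\infty$ is generated by all stable laws, and $L_\infty^{(\alpha,2)}$ by the $\beta$-stable laws with $\beta\in(\alpha,2)$ plus the Gaussian (intersected, in (iii) and (iv), with the weak-mean-zero, respectively mean-zero, constraint). It then suffices to check (a) that every such stable law, suitably centered, lies in $\mathfrak D(\Phi_f^n)\cap\mathfrak R(\Phi_f^n)$ for all $n$ --- which holds because $\Phi_f$ sends a $\beta$-stable law to a $\beta$-stable law, merely rescaled and re-centered --- and (b) that $\mathfrak R_{\infty}(\Phi_f)$ is closed under convolution and weak limits, which follows from the closedness and continuity properties of each $\Phi_f$ established in [S]. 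Combining (a) and (b) gives $(\text{claimed class})\subseteq\mathfrak R_{\infty}(\Phi_f)$.

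The hard part will be the critical case $\alpha=1$, statement (iii). There the centering function in the triplet is the borderline one, the integral that would define a genuine mean diverges, and one must show that the exact limiting condition extracted from the drift recursion is ``weak mean $0$'' --- neither the stronger ``mean $0$'' nor anything weaker. This requires a careful asymptotic analysis of the drift of $\Phi_1^n\nu$ as $n\to\infty$, using the precise behavior of $\bar f_{p,1}$, $l_{1,1}$, $f_1$ near $s=0$ and near the right ends of their supports, and matching the (logarithmically) divergent part against the definition of the weak mean. A secondary difficulty is confirming that the three a priori different families produce the same limit class despite having different one-step ranges; I expect this to follow from the fact that the associated radial operators $\mathcal K_f$, though different, all have the same limit-$=$-complete-monotonicity behavior and admit the same stable indices after iteration, so that the parameters $p$ and $q$ wash out in the limit.
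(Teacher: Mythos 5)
Your overall picture (ranges described by radial decompositions from [S], limit classes of the form $L_{\infty}^{(\alpha,2)}$ cut down by mean or weak-mean conditions) is right, but both of your main mechanisms have genuine gaps, and neither is how the paper actually closes the argument. For the upper bound you assert that the $n$-fold iterates of the radial operator $\mathcal K_f$ ``single out, in the limit, precisely the completely monotone profiles.'' That is exactly the statement that needs proof, and it is not accessible by direct iteration: $\bar\Phi_{p,\alpha}^n$ and $\Psi_{\alpha}^n$ are not again mappings of the same family, so there is no closed-form one-step description of $\mathfrak R(\Phi_f^n)$ to pass to the limit. The paper instead proves a comparison lemma: since every $\mu\in\mathfrak R(\Phi_f)$ has a radial decomposition with a \emph{decreasing} density, one gets the one-step inclusion $\mathfrak R(\Phi_f)\subset\mathfrak R(\Lambda_{1,-\beta-1})$; a separate commutation lemma $\Phi_f\Lambda=\Lambda\Phi_f$ (valid because $\Lambda$ has compactly supported integrand and is one-to-one) then bootstraps this to $\mathfrak R(\Phi_f^n)\subset\mathfrak R(\Lambda^n)$, and $\mathfrak R_{\infty}(\Lambda_{1,-\beta-1})=L_{\infty}$ is already known from the composition identity $\Lambda_{q,\alpha}^n=\Lambda_{nq,\alpha}$ in [S]. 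Without this (or an equivalent device) your intersection computation is a heuristic, not a proof.

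For the lower bound your route via ``smallest class closed under convolution and weak convergence generated by the relevant stable laws'' needs two facts you do not establish: that $L_{\infty}^{(\alpha,2)}\cap\{\text{mean }0\}$ and $L_{\infty}^{(1,2)}\cap\{\text{weak mean }0\}$ really are generated this way (weak limits do not preserve mean $0$, let alone weak mean $0$, so this is not a formal consequence of the Urbanik--Sato description of $L_{\infty}$), and that $\mathfrak R(\Phi_f^n)$ is closed under weak convergence for these non-compactly-supported $f$. The paper sidesteps both by proving surjectivity directly: from $\Gamma_{\mu}(d\beta)=\Gamma(\beta-\alpha)\Gamma_{\rho}(d\beta)$, $\lambda_{\beta}^{\mu}=\lambda_{\beta}^{\rho}$ one solves explicitly for a preimage $\rho\in L_{\infty}\cap\mathfrak D(\Psi_{\alpha})$ of any $\mu$ in the target class, then iterates. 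Finally, you misplace the difficulty at $\alpha=1$: the weak-mean-$0$ condition is a \emph{one-step} range phenomenon (it is already in the characterization of $K_{p,1}$ and $K_{\infty,1}$ in [S]), not the outcome of an $n\to\infty$ drift asymptotic. The real work is (a) verifying that the constructed preimage $\rho$ satisfies the domain condition $\lim_{a\to\infty}\int_1^a s^{-1}ds\int_{|x|>s}x\,\nu_{\rho}(dx)$ exists, which the paper reduces to the weak mean of $\mu$ via a comparison of $\Gamma_{\rho}$ with $\Gamma_{\mu}$, and (b) for $\bar\Phi_{p,1}$, where that comparison fails, arguing at the level of the Lévy-measure transformation $\bar\Phi_{p,1}^L$ and its injectivity. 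None of this is visible in your plan.
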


Let us explain the concepts used in the statement of Theorem \ref{t1a}.  
A distribution $\mu
\in ID$ belongs to $L_{\infty}$ if and only if its L\'evy measure $\nu_{\mu}$ is
represented as 
\[
\nu_{\mu}(B)=\int_{(0,2)} \Gamma_{\mu}(d\beta)\int_S \lambda_{\beta}^{\mu}
(d\xi) \int_0^{\infty} 1_B(r\xi) r^{-\beta-1} dr
\]
for Borel sets $B$ in $\mathbb{R}^d$, where $\Gamma_{\mu}$ is a measure on 
the open interval $(0,2)$ satisfying\linebreak
$\int_{(0,2)} (\beta^{-1}+(2-\beta)^{-1}) \Gamma_{\mu}(d\beta)<\infty$
and $\{\lambda_{\beta}^{\mu}\colon \beta\in(0,2)\}$ is a measurable family 
of probability measures on $S=\{\xi\in\mathbb{R}^d\colon |\xi|=1\}$.  This 
$\Gamma_{\mu}$ is uniquely determined by $\nu_{\mu}$ and  
$\{\lambda_{\beta}^{\mu}\}$ is determined by $\nu_{\mu}$
up to $\beta$ of $\Gamma_{\mu}$-measure $0$  (see [S] and Sato (1980)).  
For a Borel subset $E$ of the interval $(0,2)$, the class $L_{\infty}^E$ 
denotes, as in [S], the totality of $\mu\in L_{\infty}$ such that 
$\Gamma_{\mu}$ is concentrated on $E$.
The classes $L_{\infty}^{(\alpha,2)}$ and $L_{\infty}^{(1,2)}$ appearing in 
Theorem \ref{t1a} are for $E=(\alpha,2)$ and $(1,2)$, respectively.
Let $C_\mu (z)$ ($z\in\mathbb{R}^d$), $A_{\mu}$, and $\nu_{\mu}$ be the 
cumulant function, the Gaussian covariance matrix, and the L\'evy 
measure of $\mu\in ID$. 
A distribution $\mu\in ID$ is said to have weak mean $m_{\mu}$ if 
$\lim_{a\to\infty} \int_{1<|x|\leq a} x\nu_{\mu}(dx)$ exists in 
$\mathbb{R}^d$ and if
\[
C_{\mu}(z)=-\tfrac12 \langle z,A_{\mu}z\rangle +\lim_{a\to\infty} 
\int_{|x|\leq a} (e^{i\langle z,x\rangle}-1-i\langle z,x\rangle ) 
\nu_{\mu}(dx)+i\langle m_{\mu},z\rangle.
\]
This concept was introduced by [S] recently.
If  $\mu\in ID$ has mean $m_{\mu}$ (that is, $\int_{\mathbb{R}^d}|x|\mu(dx)
<\infty$ and $\int_{\mathbb{R}^d} x\mu(dx)=m_{\mu}$), then $\mu$ has weak 
mean $m_{\mu}$ (Remark 3.8 of [S]).

Section 2 begins with exact definitions of $f_{\alpha}$, $\bar f_{p,\alpha}$, 
and $l_{q,\alpha}$
and expounds existing results concerning $\mathfrak R_{\infty}(\Phi_f)$.
Then, in Section 3, we will prove Theorem \ref{t1a}.  
In Section 4 we will give examples of $\Phi_f$ for which 
$\mathfrak R_{\infty}(\Phi_f)$ is different from those appearing in 
Theorem 1.1.  Section 5 gives some concluding remarks.


\section{Known results}

Let $-\infty<\alpha<2$, $p>0$, and $q>0$ and let
\begin{align*}
\bar g_{p,\alpha}(t)&=\frac{1}{\Gamma(p)}
\int_t^1 (1-u)^{p-1} u^{-\alpha-1} du,\quad 0<t\leq 1,\\
j_{q,\alpha}(t)&=\frac{1}{\Gamma(q)} \int_t^1 (-\log u)^{q-1} u^{-\alpha-1} du,
\quad 0<t\leq 1,\\
g_{\alpha}(t)&=\int_t^{\infty} u^{-\alpha-1} e^{-u} du, \quad 0<t\leq\infty.
\end{align*}
Let $t=\bar f_{p,\alpha}(s)$ for $0\leq s<\bar g_{p,\alpha}(0+)$, 
$t=l_{q,\alpha}(s)$ for $0\leq s<j_{q,\alpha}(0+)$, and $t=f_{\alpha}(s)$ for 
$0\leq s<g_{\alpha}(0+)$ be 
the inverse functions of $s=\bar g_{p,\alpha}(t)$, $s=j_{q,\alpha}(t)$, and 
$s=g_{\alpha}(t)$, respectively.  They are continuous, strictly decreasing 
functions. If $\alpha<0$, then $\bar g_{p,\alpha}(0+)$, $j_{q,\alpha}(0+)$,
and $g_{\alpha}(0+)$ are
finite and we define $\bar f_{p,\alpha}(s)$, $l_{q,\alpha}(s)$, and 
$f_{\alpha}(s)$ to be zero for $s\geq \bar g_{p,\alpha}(0+)$, $j_{q,\alpha}(0+)$, 
and $g_{\alpha}(0+)$, respectively. 
Let  $\bar\Phi_{p,\alpha}$, $\Lambda_{q,\alpha}$, and $\Psi_{\alpha}$ denote 
$\Phi_f$ with 
$f=\bar f_{p,\alpha}$, $l_{q,\alpha}$, and $f_{\alpha}$, respectively. 
Let $K_{p,\alpha}$, $L_{q,\alpha}$, and $K_{\infty,\alpha}$ be the ranges of
$\bar\Phi_{p,\alpha}$, $\Lambda_{q,\alpha}$, and $\Psi_{\alpha}$, respectively.
These mappings and classes were systematically studied in Sato (2006) and [S].
In the following cases we have explicit expressions:
\begin{align*}
\bar f_{1,\alpha}(s)&=l_{1,\alpha}(s)= \begin{cases}
(1-|\alpha|s)^{1/|\alpha|}\,1_{[0,1/|\alpha|]}(s)\quad &\text{for }\alpha<0,\\
e^{-s}\quad &\text{for }\alpha=0,\\
(1+\alpha s)^{-1/\alpha}\quad &\text{for }\alpha>0, \end{cases} \\
\bar f_{p,-1}(s)&=\{ 1-(\Gamma(p+1)\,s)^{1/p}\} \,1_{[0,1/\Gamma(p+1)]}(s),\quad
p>0,\\
l_{q,0}(s)&=\exp (-(\Gamma(q+1)s)^{1/q}),\quad q>0,\\
f_{-1}(s)&=(-\log s)\,1_{[0,1]}(s).
\end{align*}
In the case $p=q=1$ we have $\bar\Phi_{1,\alpha}=\Lambda_{1,\alpha}$ and 
$K_{1,\alpha}=L_{1.\alpha}$, which are in essence treated earlier  
by Jurek (1988, 1989); $\bar\Phi_{1,\alpha}=\Lambda_{1,\alpha}$ were 
studied by Maejima et al.\ (2010), and Maejima and Ueda (2010b) with the notation 
$\Phi_{\alpha}$.  The mapping $\Lambda_{q,0}$ and the class $L_{q,0}$ with
$q=1,2,\ldots$ coincide with those introduced by Jurek (1983)
in a different form.   A variant of $\Psi_{\alpha}$ is found in 
Grigelionis (2007). 

A related family is 
\[
G_{\alpha,\beta}(t)=\int_t^{\infty} u^{-\alpha-1} e^{-u^{\beta}}du,\quad 
0<t\leq\infty,
\]
for $-\infty<\alpha<2$ and $\beta>0$. 
Let $t=G_{\alpha,\beta}^*(s)$ for $0\leq s<G_{\alpha,\beta}(0+)$ be the inverse
function of $s=G_{\alpha,\beta}(t)$.  If $\alpha<0$, then $G_{\alpha,\beta}(0+)$ is
finite and we define $G_{\alpha,\beta}^*(s)=0$ for $s\geq G_{\alpha,\beta}(0+)$.
Let $\Psi_{\alpha,\beta}$ denote $\Phi_f$ with $f=G_{\alpha,\beta}^*$.  This was 
introduced by Maejima and Nakahara (2009) and studied by Maejima and
Ueda (2010b) and, in the level of L\'evy measures, by Maejima et al.\ (2011b). 
Clearly, $\Psi_{\alpha,1}=\Psi_{\alpha}$. 
We have
\[
G_{-\beta,\beta}^* (s)=(-\log \beta s)^{1/\beta}\,1_{[0,1/\beta]}(s),\quad \beta>0.
\]
Earlier the mappings $\Psi_{0,2}$ and $\Psi_{-\beta,\beta}$ were treated in
Aoyama et al.\ (2008) and Aoyama et al.\ (2010), respectively; $\Psi_{-2,2}$ 
appeared also in Arizmendi et al.\ (2010). 

Maejima and Sato (2009) proved the following two results.

\begin{prop}\label{p2a}
Let $0<t_0\leq\infty$.  Let $h(u)$ be a positive decreasing function on 
$(0,t_0)$ such that $\int_0^{t_0}(1+u^2) h(u)du<\infty$.  Let 
$g(t)=\int_t^{t_0} h(u)du$ for $0<t\leq t_0$.  Let
$t=f(s)$, $0\leq s<g(0+)$, be the inverse function of $s=g(t)$ and let 
$f(s)=0$ for $s\geq g(0+)$.  Then $\mathfrak R_{\infty}(\Phi_f)=L_{\infty}$.
\end{prop}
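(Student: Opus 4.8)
The plan is to establish the two inclusions $L_\infty\subseteq\mathfrak R_\infty(\Phi_f)$ and $\mathfrak R_\infty(\Phi_f)\subseteq L_\infty$ separately. Throughout I would pass to the polar change of variables $r=|x|$, $v=\log(1/r)$ and exploit that, after the substitution $w=-\log t$, the identity $\nu_{\Phi_f\rho}(B)=\int_0^{t_0}h(t)\,dt\int_{\mathbb R^d}1_B(tx)\,\nu_\rho(dx)$ says exactly that $\nu_{\Phi_f\rho}$ is the convolution, in the variable $v$, of $\nu_\rho$ with the measure $\kappa_f(w)\,dw:=h(e^{-w})e^{-w}\,dw$ supported on $(b,\infty)$, $b=-\log t_0$ (dilations preserve directions, so the spherical component is carried along). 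Iterating, $\nu_{\Phi_f^n\rho}=\kappa_f^{\ast n}\ast_v\nu_\rho$, while $A_{\Phi_f^n\rho}=c(2)^nA_\rho$ with $c(\beta):=\int_0^{t_0}t^\beta h(t)\,dt$, and the drift of $\Phi_f^n\rho$ is affine in the drift of $\rho$ with nonzero leading coefficient $c(1)^n$. Here the hypothesis $\int_0^{t_0}(1+u^2)h(u)\,du<\infty$ enters first: since $t^\beta\le 1+t^2$ on $(0,t_0)$ for $\beta\in[0,2]$, the map $\beta\mapsto c(\beta)$ is finite, continuous and strictly positive on $[0,2]$, hence bounded between two positive constants; the same estimate (bounding $f(s)^2|x|^2\wedge 1$ after splitting at $|x|=1$ and using $s=g(t)$) shows $\mathfrak D(\Phi_f)=ID$, whence $\mathfrak D(\Phi_f^n)=ID$ for every $n$, so domains never obstruct the argument.

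For $L_\infty\subseteq\mathfrak R_\infty(\Phi_f)$ I would argue directly with the L\'evy-measure description of $L_\infty$ recalled in the Introduction. The computation above shows that $\Phi_f$ sends an $L_\infty$ distribution to an $L_\infty$ distribution with $\Gamma_{\Phi_f\rho}(d\beta)=c(\beta)\,\Gamma_\rho(d\beta)$, the spherical family $\{\lambda_\beta\}$ unchanged, and $A_{\Phi_f\rho}=c(2)A_\rho$. Hence, given $\mu\in L_\infty$ and $n\ge 1$, I take $\rho$ to be the $L_\infty$ distribution with $\Gamma_\rho(d\beta)=c(\beta)^{-n}\Gamma_\mu(d\beta)$, spherical family $\{\lambda_\beta^\mu\}$, Gaussian part $c(2)^{-n}A_\mu$, and drift chosen so that $\Phi_f^n\rho=\mu$ exactly (possible because that drift is affine in the drift of $\rho$ with nonzero coefficient). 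Since $c(\beta)$ is bounded away from $0$ and $\infty$, the finiteness condition $\int_{(0,2)}(\beta^{-1}+(2-\beta)^{-1})\,\Gamma_\rho(d\beta)<\infty$ survives, so $\rho\in L_\infty\subseteq\mathfrak D(\Phi_f^n)$ and $\mu\in\mathfrak R(\Phi_f^n)$. As $n$ is arbitrary, $\mu\in\mathfrak R_\infty(\Phi_f)$.

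The reverse inclusion is the substance. Fix $\mu\in\mathfrak R_\infty(\Phi_f)$ and, for each $n$, a preimage $\rho_n$ with $\Phi_f^n\rho_n=\mu$. In the $v$-coordinates this says that the radial L\'evy measure $\Theta$ of $\mu$ admits, for every $n$, a factorization $\Theta=\kappa_f^{\ast n}\ast_v\Theta_n$ in which $\Theta_n$ is again a radial L\'evy measure; the Gaussian part imposes nothing ($A_\mu=c(2)^nA_{\rho_n}$ is always solvable) and the drift identity is harmless under the present hypothesis. The guiding spectral fact is that for each $\beta$ the measure $e^{\beta v}\,dv$ is an eigen-measure of convolution by $\kappa_f$, with eigenvalue $c(\beta)>0$, and in the radius $r=e^{-v}$ it is $r^{-\beta-1}\,dr$, which is a valid radial L\'evy measure precisely for $0<\beta<2$ (because $\int_0^1 r^{1-\beta}\,dr<\infty$ and $\int_1^\infty r^{-\beta-1}\,dr<\infty$ force, respectively, $\beta<2$ and $\beta>0$). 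Thus the $L_\infty$ distributions are exactly those whose radial L\'evy measure is a mixture $\int_{(0,2)}\Gamma(d\beta)\,(e^{\beta v}\,dv)$ with $\int(\beta^{-1}+(2-\beta)^{-1})\,\Gamma(d\beta)<\infty$, and every such mixture is trivially infinitely $\kappa_f$-deconvolvable inside the cone of radial L\'evy measures (deconvolution multiplies $\Gamma$ by $c(\cdot)^{-n}$).

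What remains --- and this is the step I expect to be the main obstacle --- is the converse: to show that any radial L\'evy measure admitting $\kappa_f$-deconvolutions of all orders, each staying in the L\'evy cone, must be such an exponential mixture, with the integrability of $\Gamma$ against $\beta^{-1}+(2-\beta)^{-1}$. I would attack this by a Bernstein/Choquet-type representation: pass to the appropriate Mellin--Laplace transform in $v$, use that the L\'evy integrability of $\mu$ at the two ends $r\to 0$ and $r\to\infty$ confines the spectral parameter to $(0,2)$, and recover the finite-mass condition on $\Gamma$ from quantitative control of the deconvolutions $\Theta_n$ near those ends; this is where the monotonicity of $h$ (equivalently, the convexity of $f$) and the finiteness of $c(0)$ and $c(2)$, i.e. $\int(1+u^2)h<\infty$, are genuinely needed, both to keep the deconvolutions in the cone and to make the estimates uniform in $n$. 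Once this representation is in place, translating back gives $\mu\in L_\infty$, and together with the second paragraph this yields $\mathfrak R_\infty(\Phi_f)=L_\infty$. Two routine points to verify along the way are that $\Phi_f$ is one-to-one on its range (so that the $\Theta_n$ are unambiguously determined) and the already-noted equality $\mathfrak D(\Phi_f^n)=ID$; both follow from the structure theory of these stochastic integral mappings.
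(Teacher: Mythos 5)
Your first inclusion, $L_{\infty}\subset\mathfrak R_{\infty}(\Phi_f)$, is essentially sound: the computation $\Gamma_{\Phi_f\rho}(d\beta)=c(\beta)\,\Gamma_{\rho}(d\beta)$ with $c(\beta)=\int_0^{t_0}t^{\beta}h(t)\,dt$ bounded between positive constants on $[0,2]$, together with the matching of Gaussian part and location parameter, is exactly the kind of argument the paper itself runs in Case 1 of its proof of Theorem 1.1 (for $\Psi_{\alpha}$, $\alpha<0$), and the compact support of $f$ does give $\mathfrak D(\Phi_f)=ID$. The problem is the reverse inclusion $\mathfrak R_{\infty}(\Phi_f)\subset L_{\infty}$. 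You correctly identify it as ``the substance,'' but you do not prove it: you reformulate it as the statement that a radial L\'evy measure which is $n$-fold $\kappa_f$-deconvolvable within the cone of L\'evy measures for every $n$ must be a mixture of the exponentials $e^{\beta v}dv$, $\beta\in(0,2)$, and then defer this to an unexecuted ``Bernstein/Choquet-type representation'' with unspecified ``quantitative control of the deconvolutions near the ends.'' That reformulated statement \emph{is} the proposition (it is where the monotonicity of $h$ must actually enter --- you note it is ``genuinely needed'' but never use it), so the proposal leaves the hard half of the equivalence unproved. The gap is not cosmetic: infinite divisibility with respect to a single convolution kernel does not by itself force a continuous exponential-mixture (stable-mixture) structure --- the paper's Section 2 example $f(s)=b^{-[s]}$, whose limit class is $L_{\infty}(b)\supsetneqq L_{\infty}$, shows that conclusions of this type depend delicately on the structure of $f$, so the missing step cannot be waved through as a soft compactness or transform argument.

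For comparison: the paper does not prove Proposition 2.2 at all --- it is quoted from Maejima and Sato (2009) --- and when the paper needs the analogous upper bound in its own Theorem 1.1 it proceeds quite differently from your plan. It first shows (Lemma 3.2 type results from [S]) that every $\mu\in\mathfrak R(\Phi_f)$ has a radial decomposition with a decreasing (monotone of order $1$) radial density, hence $\mathfrak R(\Phi_f)\subset\mathfrak R(\Lambda_{1,-\beta-1})$; it then uses the commutation $\Phi_f\Lambda=\Lambda\Phi_f$ (Lemma 3.5, which needs $f$ compactly supported or $\Lambda$ one-to-one) to upgrade this to $\mathfrak R(\Phi_f^n)\subset\mathfrak R(\Lambda^n)$ by induction, and finally invokes the known identification $\mathfrak R_{\infty}(\Lambda_{1,\alpha})=L_{\infty}$ coming from the semigroup relation $\Lambda_{q,\alpha}^n=\Lambda_{nq,\alpha}$ and the description of $\bigcap_q L_{q,\alpha}$ in [S] (Proposition 2.4). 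If you want to complete your proof along your own lines, the realistic route is to use the monotonicity of $h$ to show that the radial densities of the $n$-th order preimages are monotone of increasing order (or to compare with the nested Urbanik--Jurek classes directly), rather than to attempt a direct Choquet analysis of $\kappa_f$-deconvolvability, which as written is only a statement of intent.
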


\begin{prop}\label{p2b}
$\mathfrak R_{\infty}(\Psi_0)=L_{\infty}$.
\end{prop}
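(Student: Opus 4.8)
\medskip
\noindent\textbf{Proof proposal.}
The plan is to establish the two inclusions $L_\infty\subseteq\mathfrak R_\infty(\Psi_0)$ and $\mathfrak R_\infty(\Psi_0)\subseteq L_\infty$ separately. Note first that Proposition \ref{p2a} does \emph{not} cover $\Psi_0$: the relevant density is $h_0(u)=u^{-1}e^{-u}$ (the density $u^{-\alpha-1}e^{-u}$ with $\alpha=0$), for which $\int_0^\infty(1+u^2)h_0(u)\,du=\infty$ since $\int_0^1 u^{-1}e^{-u}\,du=\infty$; equivalently $g_0(0+)=\infty$, so $f_0$ has unbounded support, precisely the case excluded in Proposition \ref{p2a}. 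My plan is nevertheless to reduce to Proposition \ref{p2a}, but applied to $\Psi_{-\varepsilon}$ with small $\varepsilon\in(0,1)$: there $h_{-\varepsilon}(u)=u^{\varepsilon-1}e^{-u}$ \emph{is} positive, decreasing, and satisfies $\int_0^\infty(1+u^2)u^{\varepsilon-1}e^{-u}\,du=\Gamma(\varepsilon)+\Gamma(\varepsilon+2)<\infty$, so that $\mathfrak R_\infty(\Psi_{-\varepsilon})=L_\infty$.

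Throughout I use the description (available from [S] and Sato (2006)) of the action of $\Phi_f$ on $L_\infty$: if $\rho\in L_\infty$ and $c_f(\beta):=\int_0^\infty f(s)^\beta\,ds<\infty$ for $\beta$ in the support of $\Gamma_\rho$, then $\Phi_f\rho\in L_\infty$ with $\Gamma_{\Phi_f\rho}(d\beta)=c_f(\beta)\,\Gamma_\rho(d\beta)$, $\lambda_\beta^{\Phi_f\rho}=\lambda_\beta^\rho$, Gaussian covariance $c_f(2)A_\rho$, and a drift that is an invertible affine function of $\gamma_\rho$ provided $c_f(1)<\infty$. For $f=f_0$ one has $c_{f_0}(\beta)=\int_0^\infty t^{\beta-1}e^{-t}\,dt=\Gamma(\beta)$, which is finite and bounded away from $0$ on $(0,2)$, and $c_{f_0}(1)=1$, so no weak-mean difficulty arises. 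Hence, given $\mu\in L_\infty$, the distribution $\rho$ specified by $\Gamma_\rho(d\beta)=\Gamma(\beta)^{-1}\Gamma_\mu(d\beta)$ (admissible, as $\Gamma(\beta)^{-1}$ is bounded on $(0,2)$, so $\Gamma_\rho$ still satisfies the $L_\infty$ integrability condition), $\lambda_\beta^\rho=\lambda_\beta^\mu$, $A_\rho=A_\mu$, and the unique drift $\gamma_\rho$ for which $\Psi_0\rho$ has drift $\gamma_\mu$, lies in $L_\infty$ and satisfies $\Psi_0\rho=\mu$. Thus $\Psi_0$ maps $L_\infty$ onto $L_\infty$; iterating, $\Psi_0^{\,n}(L_\infty)=L_\infty$, so $L_\infty\subseteq\mathfrak R(\Psi_0^{\,n})$ for every $n$ and $L_\infty\subseteq\mathfrak R_\infty(\Psi_0)$.

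For the reverse inclusion I would, for each $\varepsilon\in(0,1)$, factor $\Psi_0=\Psi_{-\varepsilon}\circ\Theta_\varepsilon$, where $\Theta_\varepsilon:=\Phi_{\hat f_\varepsilon}$ and $\hat f_\varepsilon$ is the inverse (extended by $0$) of the strictly decreasing function $t\mapsto\Gamma(\varepsilon)^{-1}\int_t^1 u^{-1}(1-u)^{\varepsilon-1}\,du$ on $(0,1]$. The substance of the factorization is that the image of Lebesgue measure on $\mathbb{R}_+$ under $f_0$ is the multiplicative convolution on $(0,\infty)$ of its images under $f_{-\varepsilon}$ and under $\hat f_\varepsilon$; at the level of the densities $t^{-1}e^{-t}$, $t^{\varepsilon-1}e^{-t}$, and $\Gamma(\varepsilon)^{-1}t^{-1}(1-t)^{\varepsilon-1}1_{(0,1)}(t)$ this reduces to the identity $\int_v^\infty(w-v)^{\varepsilon-1}e^{-w}\,dw=\Gamma(\varepsilon)e^{-v}$, and it carries the same content as the multiplier relation $c_{f_{-\varepsilon}}(\beta)\,c_{\hat f_\varepsilon}(\beta)=\Gamma(\beta)=c_{f_0}(\beta)$ with $c_{f_{-\varepsilon}}(\beta)=\Gamma(\beta+\varepsilon)$ and $c_{\hat f_\varepsilon}(\beta)=\Gamma(\beta)/\Gamma(\beta+\varepsilon)$ (together with its $c(2)$ counterpart for the Gaussian part and, since $c_{f_0}(1)$, $c_{f_{-\varepsilon}}(1)$, $c_{\hat f_\varepsilon}(1)$ are all finite, its counterpart for the drift). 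Because all these mappings act on L\'evy measures, Gaussian parts, and drifts by \emph{commuting} operations, $\Psi_0^{\,n}=\Psi_{-\varepsilon}^{\,n}\circ\Theta_\varepsilon^{\,n}$, whence $\mathfrak R(\Psi_0^{\,n})\subseteq\mathfrak R(\Psi_{-\varepsilon}^{\,n})$ for every $n$. Therefore $\mathfrak R_\infty(\Psi_0)=\bigcap_n\mathfrak R(\Psi_0^{\,n})\subseteq\bigcap_n\mathfrak R(\Psi_{-\varepsilon}^{\,n})=\mathfrak R_\infty(\Psi_{-\varepsilon})=L_\infty$, and with the first inclusion this gives $\mathfrak R_\infty(\Psi_0)=L_\infty$.

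The step I expect to demand the most care is upgrading the multiplier relation (with its Gaussian and drift analogues) to an honest identity of mappings $\Psi_0=\Psi_{-\varepsilon}\circ\Theta_\varepsilon$ on the relevant domains: one has to check that $\hat f_\varepsilon$ is locally square-integrable (it is, since $\int_0^1 t(1-t)^{\varepsilon-1}\,dt<\infty$), that $\mathfrak D(\Psi_0^{\,n})\subseteq\mathfrak D(\Theta_\varepsilon^{\,n})$ and $\Theta_\varepsilon^{\,n}(\mathfrak D(\Psi_0^{\,n}))\subseteq\mathfrak D(\Psi_{-\varepsilon}^{\,n})$, and that the drifts compose correctly under iteration --- harmless here only because every exponent involved stays strictly below the critical value $1$, so the weak-mean subtleties of parts (iii) and (iv) of Theorem \ref{t1a} play no role. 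The rest is routine bookkeeping with the factors $c_f(\beta)$.
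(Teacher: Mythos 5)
Your proof is correct and, up to presentation, it is the route the paper itself takes. The paper disposes of Proposition \ref{p2b} by citing the identity $\Psi_0=\Lambda_{1,0}\Psi_{-1}=\Psi_{-1}\Lambda_{1,0}$ together with Proposition \ref{p2a} applied to $\Psi_{-1}$; your factorization $\Psi_0=\Psi_{-\varepsilon}\circ\Theta_\varepsilon$ is exactly this identity at the endpoint $\varepsilon=1$ (which your construction also permits), since there $\hat f_1=\bar f_{1,0}=l_{1,0}=e^{-s}$, i.e.\ $\Theta_1=\Lambda_{1,0}$, and $\Psi_{-1}=\Phi_{f_{-1}}$ with $f_{-1}(s)=(-\log s)1_{[0,1]}(s)$ is covered by Proposition \ref{p2a}. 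So the one-parameter family of factorizations buys nothing beyond a single choice of $\varepsilon$; taking $\varepsilon=1$ also lets you settle the commutativity needed for $\Psi_0^{\,n}=\Psi_{-\varepsilon}^{\,n}\Theta_\varepsilon^{\,n}$ by Lemma \ref{l3e}, since $f_{-\varepsilon}$ has compact support (because $g_{-\varepsilon}(0+)=\Gamma(\varepsilon)<\infty$) and $\Psi_{-\varepsilon}$ is one-to-one. Your lower-bound argument ($\Psi_0$ maps $L_\infty\cap\mathfrak D(\Psi_0)$ onto $L_\infty$, then iterate) is the one the paper carries out in Case 2 of Section 3. Two points to tighten: (a) before writing $\Psi_0\rho=\mu$ you must verify $\rho\in\mathfrak D(\Psi_0)$, i.e.\ $\int_{(0,2)}\beta^{-2}\Gamma_\rho(d\beta)<\infty$ by Lemmas \ref{l3a} and \ref{l3c}; boundedness of $\Gamma(\beta)^{-1}$ only gives $\rho\in L_\infty$, and the needed estimate uses $\Gamma(\beta)^{-1}=\beta/\Gamma(\beta+1)$, so that $\beta^{-2}\Gamma_\rho(d\beta)\leq C\beta^{-1}\Gamma_\mu(d\beta)$, which is integrable by the defining condition on $\Gamma_\mu$. (b) The same domain bookkeeping must be threaded through the induction giving $\Psi_0^{\,n}(L_\infty\cap\mathfrak D(\Psi_0^{\,n}))=L_\infty$; the paper's Case 2 shows how. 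Neither point is an obstruction.
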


It follows from Proposition \ref{p2a} that $\mathfrak R_{\infty}(\Phi_f)
=L_{\infty}$ for $f=\bar f_{p,\alpha}$ with $p\geq1$
and $-1\leq\alpha<0$, $f=l_{q,\alpha}$ with $q\geq1$ and
$-1\leq\alpha<0$, $f=f_{\alpha}$ with $-1\leq\alpha<0$, 
and $f=G_{\alpha,\beta}^*$ with  $-1\leq\alpha<0$ and $\beta>0$.
The function $f_0$ for $\Psi_0=\Phi_{f_0}$ does not satisfy the condition in
Proposition \ref{p2a} but Proposition \ref{p2b} is proved using the identity 
$\Psi_0=\Lambda_{1,0}\Psi_{-1}=\Psi_{-1}\Lambda_{1,0}$.

In November 2007--January 2008, Sato wrote four memos, showing the part related to 
$\Psi_{\alpha}$ in (ii), (iii), and (iv) of Theorem \ref{t1a}. 
But assertion (iii) for $\Psi_1$ was shown with
the set $\{\mu\in ID\colon \text{$\mu$ has weak mean $0$} \}$ replaced by the set
of $\mu\in L_{\infty}$ satisfying some condition related to (4.6) of
Sato (2006). At that time the concept of weak mean was not yet introduced.
Those memos showed that some proper subclasses of $L_{\infty}$ appear
as limit classes $\mathfrak R_{\infty}(\Phi_f)$.

Sato's memos were referred to by a series of papers Maejima and Ueda (2009a, b,
2010a, b) and Ichifuji et al.\ (2010).
In Maejima and Ueda (2010a, c) they characterized 
$\mathfrak R(\Lambda_{1,\alpha}^n)$, $-\infty<\alpha<2$, 
for $n=1,2,\ldots$, in relation to a decomposability which they called
$\alpha$-selfdecomposability, and found
$\mathfrak R_{\infty}(\Lambda_{1,\alpha})$ for $-\infty<\alpha<2$.  
But the description of 
$\mathfrak R_{\infty}(\Lambda_{1,1})$ was similar to Sato's memos. 
In Maejima and Ueda (2010b) they showed that
$\Psi_{\alpha,\beta}$ with $-\infty<\alpha<2$ and $\beta>0$ satisfies
$\mathfrak R_{\infty}(\Psi_{\alpha,\beta})=\mathfrak R_{\infty}(\Psi_{\alpha})$,
under the condition that $\alpha\neq1+n\beta$ for $n=0,1,2,\ldots$.
For $\Psi_{0,2}$ and $\Psi_{-\beta,\beta}$ with $\beta>0$,
this result was earlier obtained by Aoyama et al.\ (2010).
Further it was shown in Maejima and Ueda (2009b) that 
$\mathfrak R_{\infty}(\Psi_{\alpha})=
\mathfrak R_{\infty}(\Lambda_{1,\alpha})$ for $-\infty<\alpha<2$.  An application of
the result in Maejima and Ueda (2010a) was given in Ichifuji et al.\ (2010).

If $f(s)=b\,1_{[0,a]}(s)$ for some $a>0$ and $b\neq 0$, then it is clear that
$\mathfrak R_{\infty}(\Phi_f)=\mathfrak R(\Phi_f)=ID$.  A first example of 
$\mathfrak R_{\infty}(\Phi_f)$
satisfying $L_{\infty}\subsetneqq \mathfrak R_{\infty}(\Phi_f) \subsetneqq ID$ 
was given by Maejima and Ueda (2009a); they showed that if $f(s)=b^{-[s]}$ for a given
$b>1$ with $[s]$ being the largest integer not exceeding $s$,  then
$\mathfrak R_{\infty}(\Phi_f)=L_{\infty}(b)$, the smallest class that is closed 
under convolution and weak convergence and contains all semi-stable
distributions on $\mathbb{R}^d$ with $b$ as a span; in this case 
$\mathfrak R(\Phi_f)$ is the class $L(b)$ of semi-selfdecomposable distributions 
on $\mathbb{R}^d$ with $b$ as a span.
See Sato (1999) for the definitions of semi-stability, semi-selfdecomposability,
and span.  See Maejima et al.\ (2000) for characterization of 
$L_{\infty}(b)$ as the limit of the class $L_n(b)$ of $n$ times 
$b$-semi-selfdecomposable distributions and for description of the 
L\'evy measures of distributions in $L_{\infty}(b)$.
Recall that $L_{\infty}\subsetneqq L_{\infty}(b)$.

The following result is deduced easily from [S].

\begin{prop}\label{p2c}
The assertions related to $\Lambda_{q,\alpha}$ in {\rm(i), (ii)}, and {\rm(iv)} of 
Theorem \ref{t1a} are true.
\end{prop}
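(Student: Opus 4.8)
The plan is to reduce everything to what [S] already says about the classes $L_{q,\alpha}=\mathfrak R(\Lambda_{q,\alpha})$ and about how they shrink as $q\to\infty$, by exploiting the semigroup structure of the family $\{\Lambda_{q,\alpha}\colon q>0\}$ in the parameter $q$.

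First I would recall from [S] the composition identity $\Lambda_{q_1,\alpha}\Lambda_{q_2,\alpha}=\Lambda_{q_1+q_2,\alpha}$ for $q_1,q_2>0$, with domains matching as in the general composition rule for stochastic integral mappings; this parameter additivity is the manifestation, for the family $l_{q,\alpha}$, of the Euler Beta identity $\int_0^1 v^{q_1-1}(1-v)^{q_2-1}\,dv=B(q_1,q_2)$ applied to the kernels $\Gamma(q)^{-1}(-\log u)^{q-1}u^{-\alpha-1}$ defining $j_{q,\alpha}$. Iterating, $\Lambda_{q,\alpha}^{\,n}=\Lambda_{nq,\alpha}$, so $\mathfrak R(\Lambda_{q,\alpha}^{\,n})=L_{nq,\alpha}$; moreover $q'\mapsto L_{q',\alpha}$ is nonincreasing, since for $q'>q$ one has $\Lambda_{q',\alpha}=\Lambda_{q,\alpha}\Lambda_{q'-q,\alpha}$ and hence $L_{q',\alpha}\subset L_{q,\alpha}$. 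As $\{nq\colon n\in\mathbb N\}$ is cofinal in $(0,\infty)$, this gives
\[
\mathfrak R_\infty(\Lambda_{q,\alpha})=\bigcap_{n=1}^{\infty}L_{nq,\alpha}=\bigcap_{q'>0}L_{q',\alpha},
\]
which no longer depends on $q$.

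It then remains to identify this decreasing intersection. Here I would use the L\'evy-measure (and drift/compensator) characterization of $L_{q,\alpha}$ from [S]: the radial part of $\nu_\mu$ in polar coordinates, after multiplication by $r^{\alpha}$, satisfies a monotonicity condition of order $q$, and $\mu$ lies in $\bigcap_{q'>0}L_{q',\alpha}$ exactly when that radial density is a scale mixture of the power laws $r^{-\beta-1}$ over the admissible exponents $\beta$, i.e. exactly when $\mu\in L_\infty$ with $\Gamma_\mu$ supported on those exponents. For $\alpha\le0$ every $\beta\in(0,2)$ is admissible and no further restriction on the Gaussian or drift part survives, so $L_\infty^{(\alpha,2)}=L_\infty$ and we obtain (i); for $0<\alpha<2$ convergence of $\int_0^{\infty-}l_{q,\alpha}(s)\,dX_s$ on a $\beta$-stable process requires $\beta>\alpha$, which in the limit confines $\Gamma_\mu$ to $(\alpha,2)$, giving $L_\infty^{(\alpha,2)}$ and hence (ii); and for $1<\alpha<2$ the compensating constants in the representation of $\Lambda_{q,\alpha}$ force, as $q\to\infty$, the first moment to exist and to vanish, adjoining the condition that $\mu$ have mean $0$ and hence (iv). Each of these ingredients is stated in, or is an immediate consequence of, the results of [S], which is why the proposition is offered as an easy deduction.

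The step requiring care is the last one: one must check that the limiting monotonicity condition is precisely the power-law mixture representation defining $L_\infty^{(\alpha,2)}$, with neither a larger nor a smaller index set, and, when $1<\alpha<2$, that exactly the mean-zero condition survives from the compensators of $\Lambda_{q,\alpha}$; both of these facts are already isolated in [S]. A secondary and purely bookkeeping point is that $\Lambda_{q,\alpha}^{\,n}=\Lambda_{nq,\alpha}$ must be verified to hold with the correct nested domains, so that the ranges genuinely agree. Note finally that the case $\alpha=1$ of Theorem \ref{t1a}(iii) is deliberately excluded here: there the surviving condition is weak mean $0$ rather than mean $0$, and pinning that down is the delicate point handled separately in Section 3.
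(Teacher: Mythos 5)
Your proposal is correct and follows essentially the same route as the paper: the composition identity $\Lambda_{q',\alpha}\Lambda_{q,\alpha}=\Lambda_{q+q',\alpha}$ (Theorem 7.3 of [S], valid for $\alpha\neq1$) gives $\Lambda_{q,\alpha}^{\,n}=\Lambda_{nq,\alpha}$ and hence $\mathfrak R_\infty(\Lambda_{q,\alpha})=\bigcap_{q'>0}L_{q',\alpha}$, and the identification of this intersection with $L_\infty$, $L_\infty^{(\alpha,2)}$, or $L_\infty^{(\alpha,2)}\cap\{\text{mean }0\}$ is exactly what the paper takes from Theorem 7.11 combined with Proposition 6.8 of [S]. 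Your exclusion of $\alpha=1$ matches the paper, which treats $\Lambda_{1,1}$ separately via $\Lambda_{1,1}=\bar\Phi_{1,1}$.
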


Indeed, in [S], Theorem 7.3 says that $\Lambda_{q+q',\alpha}=\Lambda_{q',\alpha} 
\Lambda_{q,\alpha}$ 
for $\alpha\in(-\infty,1)\cup(1,2)$, $q>0$, and $q'>0$, and hence 
$\Lambda_{q,\alpha}^n=\Lambda_{nq,\alpha}$, and further, Theorem 7.11 combined with
Proposition 6.8 describes, for
$\alpha\in(-\infty,1)\cup(1,2)$,  the class $\bigcap_{q>0} L_{q,\alpha}$, which equals 
$\bigcap_{q=1,2\ldots} L_{q,\alpha}$.

\section{Proof of Theorem 1.1}

We prepare some lemmas. We use the terminology
in [S] such as radial decomposition, monotonicity of order $p$, and complete
monotonicity.  In particular, our complete monotonicity implies vanishing at infinity. 
The location parameter $\gamma_{\mu}$ of $\mu\in ID$ is defined by
\[
C_{\mu}(z)= -\tfrac12 \langle z,A_{\mu} z\rangle +\int_{\mathbb{R}^d}
(e^{i\langle z,x\rangle}-1-i\langle z,x\rangle
1_{\{|x|\leq 1\}}(x) ) \nu_{\mu}(dx)+i\langle \gamma_{\mu},z\rangle.
\]
Let $K_{p,\alpha}^{\mathrm e}$ $[$resp.\ $K_{\infty,\alpha}^{\mathrm e}]$ 
denote the class of distributions $\mu\in ID$ for which there exist $\rho\in ID$ and
a function $q_t$ from $[0,\infty)$ into $\mathbb{R}^d$ such that 
$\int_0^t f_{p,\alpha}(s)
dX_s^{(\rho)} -q_t$  [resp.\ $\int_0^t f_{\alpha}(s) dX_s^{(\rho)} -q_t$] 
converges in probability as $t\to\infty$ and the limit has distribution $\mu$.

\begin{lem}\label{l3a}
Let $-\infty<\alpha<2$ and $p>0$.
The domains of $\bar\Phi_{p,\alpha}$ and\/ $\Psi_{\alpha}$ are as follows:
\begin{align*}
&\mathfrak D(\bar\Phi_{p,\alpha})=\mathfrak D(\Psi_{\alpha})\\
&\quad=\begin{cases} 
ID \quad &\text{for }\alpha<0,\\
\{\rho\in ID\colon \int_{|x|>1} \log|x|\,\nu_{\rho}(dx)<\infty\} &\text{for }
\alpha=0,\\
\{\rho\in ID\colon \int_{|x|>1} |x|^{\alpha}\,\nu_{\rho}(dx)<\infty\} 
&\text{for }0<\alpha<1,\\
\{\rho\in ID\colon \int_{|x|>1} |x|\,\nu_{\rho}(dx)<\infty,\;\int_{\mathbb{R}^d}
x\,\rho(dx)=0, 
&{ }\\
\phantom{\rho\in IDID} \lim_{a\to\infty}\int_1^a s^{-1}ds \int_{|x|>s}
x\,\nu_{\rho}(dx)\text{ exists in }
\mathbb{R}^d \} &\text{for }\alpha=1,\\
\{\rho\in ID\colon \int_{|x|>1} |x|^{\alpha}\,\nu_{\rho}(dx)<\infty,
\;\int_{\mathbb{R}^d}x\,\rho(dx)=0 \} 
&\text{for }1<\alpha<2.
\end{cases}
\end{align*}
\end{lem}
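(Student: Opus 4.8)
I would deduce Lemma~\ref{l3a} from the known criterion for definability of an improper stochastic integral (see Sato (2006), [S]): for a locally square-integrable $f$, the integral $\int_0^{\infty-}f(s)\,dX_s^{(\rho)}$ exists in probability if and only if (a) $A_\rho=0$ or $\int_0^\infty f(s)^2\,ds<\infty$; (b) $\int_0^\infty ds\int_{\mathbb{R}^d}(1\wedge|f(s)x|^2)\,\nu_\rho(dx)<\infty$; and (c) the limit $\lim_{t\to\infty}\bigl(\gamma_\rho\int_0^t f(s)\,ds+\int_0^t ds\int_{\mathbb{R}^d}f(s)x\,(1_{\{|f(s)x|\le1\}}-1_{\{|x|\le1\}})\,\nu_\rho(dx)\bigr)$ exists in $\mathbb{R}^d$. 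The preliminary observation is that, under the substitution $t=f(s)$ --- which turns $\int_0^\infty(\,\cdots\,)\,ds$ into an integral in $t\in(0,t_0)$ with density $-g'(t)$, where $g=\bar g_{p,\alpha}$ (so $-g'(t)=(1-t)^{p-1}t^{-\alpha-1}/\Gamma(p)$, $t_0=1$), resp.\ $g=g_\alpha$ (so $-g'(t)=t^{-\alpha-1}e^{-t}$, $t_0=\infty$) --- the two families $\bar f_{p,\alpha}$ and $f_\alpha$ have the same behaviour as $s\to\infty$: $f(s)\sim c\,s^{-1/\alpha}$ for $\alpha>0$, $f(s)\sim c\,e^{-s}$ for $\alpha=0$, and $f(s)=0$ for all large $s$ when $\alpha<0$. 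Near $s=0$ all the integrals in (a)--(c) are over a fixed compact interval carrying finite mass (the singularity $f_\alpha(0+)=\infty$ being harmless since $\int_0^\varepsilon f_\alpha(s)^2\,ds<\infty$), so only this common tail matters; this gives simultaneously $\mathfrak D(\bar\Phi_{p,\alpha})=\mathfrak D(\Psi_\alpha)$ and the independence of the answer from $p>0$.

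\textbf{Conditions (a) and (b).}  After the substitution, $\int_0^\infty f(s)^2\,ds$ is a Beta, resp.\ Gamma, integral that converges precisely because $\alpha<2$; hence (a) puts no restriction on $\rho$. For (b), interchanging the order of integration rewrites the double integral as $\int_{\mathbb{R}^d}\nu_\rho(dx)\int_0^{t_0}(1\wedge t^2|x|^2)(-g'(t))\,dt$; the contribution of $\{|x|\le1\}$ is bounded by a constant times $\int_{|x|\le1}|x|^2\,\nu_\rho(dx)<\infty$, while for $|x|>1$ an elementary split of the inner integral at $t=1/|x|$ shows it is comparable to $|x|^\alpha$ when $0<\alpha<2$, to $\log|x|$ when $\alpha=0$, and uniformly bounded when $\alpha<0$. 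Thus (b) is equivalent to: no condition for $\alpha<0$; $\int_{|x|>1}\log|x|\,\nu_\rho(dx)<\infty$ for $\alpha=0$; $\int_{|x|>1}|x|^\alpha\,\nu_\rho(dx)<\infty$ for $0<\alpha<2$.

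\textbf{Condition (c).}  Here I would split on $\alpha$. If $\alpha<1$, then $\int_0^\infty f(s)\,ds<\infty$, so the first term converges; for the second, note that $|f(s)x|\le1$ with $|x|>1$ forces $f(s)\le1/|x|$, and the tail integral of $f$ over $\{s:f(s)\le1/|x|\}$ is $\le c\,|x|^{\alpha-1}$ for large $|x|$ (for $f_\alpha$ the set $\{f>1\}$ is a fixed compact interval contributing a bounded amount), so by Fubini the second term converges absolutely under (b) --- using $\nu_\rho(\{|x|>1\})<\infty$ when $\alpha=0$. Hence no new restriction arises for $\alpha<1$. If $1\le\alpha<2$, then (b) already forces $\int_{|x|>1}|x|\,\nu_\rho(dx)<\infty$, so $\rho$ has mean $m_\rho=\gamma_\rho+\int_{|x|>1}x\,\nu_\rho(dx)$; writing $\int_{\{1<|x|\le1/f(s)\}}x\,\nu_\rho(dx)=\int_{|x|>1}x\,\nu_\rho(dx)-N(1/f(s))$ with $N(a):=\int_{|x|>a}x\,\nu_\rho(dx)\to0$, the expression in (c) becomes $m_\rho\int_0^t f(s)\,ds-\int_0^t f(s)N(1/f(s))\,ds$ up to a term bounded as $t\to\infty$. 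Since $\int_0^\infty f(s)\,ds=\infty$ now, convergence forces $m_\rho=0$, i.e.\ $\int_{\mathbb{R}^d}x\,\rho(dx)=0$; and then the substitution $a=1/f(s)$ reduces $\int_0^\infty f(s)N(1/f(s))\,ds$ to $\int_1^\infty a^{-1}N(a)\,da$ plus an absolutely convergent correction (the factor $(1-1/a)^{p-1}$, resp.\ $e^{-1/a}$, perturbs it only by a multiple of $\int_1^\infty a^{-2}|N(a)|\,da<\infty$, valid for every $p>0$). For $1<\alpha<2$ one has $|N(a)|\le c\,a^{1-\alpha}$, so $\int_1^\infty a^{-1}N(a)\,da$ converges automatically and no further condition is needed; for $\alpha=1$ it converges if and only if $\lim_{a\to\infty}\int_1^a s^{-1}\,ds\int_{|x|>s}x\,\nu_\rho(dx)$ exists. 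Collecting the equivalences from (a)--(c) yields exactly the five cases of the lemma.

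\textbf{Main obstacle.}  The genuinely delicate case is $\alpha=1$ in (c): there the competing quantities $m_\rho\int_0^t f\,ds$ and $\int_0^t f(s)N(1/f(s))\,ds$ are both of borderline (logarithmically divergent) order, so one must use the precise asymptotics $f(s)\sim(\Gamma(p)s)^{-1}$, check that the $p$- and Gamma-dependent correction factors contribute only absolutely convergent remainders, and correctly identify the surviving requirement as the existence of $\lim_{a\to\infty}\int_1^a s^{-1}\,ds\int_{|x|>s}x\,\nu_\rho(dx)$. Everything else is an interchange of integration order together with elementary Beta and Gamma estimates.
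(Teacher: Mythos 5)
The paper does not prove Lemma~\ref{l3a} at all: it simply cites Sato (2006) and Theorems 4.2, 4.4 and Propositions 4.6, 5.1 of [S]. Your proposal reconstructs the underlying argument from the standard three-part existence criterion for improper stochastic integrals, and in outline this is exactly the right (and essentially the only) route; the substitution $t=f(s)$, the comparison of the inner integral in (b) with $|x|^{\alpha}$, $\log|x|$, or a constant, and the analysis of (c) via $N(a)=\int_{|x|>a}x\,\nu_\rho(dx)$ all match how the cited results are established. The identification of $\alpha=1$ as the delicate borderline case is also correct, and your treatment of it (forcing $m_\rho=0$ because $\int_0^t f\,N(1/f)\,ds=o\bigl(\int_0^t f\,ds\bigr)$, then reducing the residual condition to convergence of $\int_1^a s^{-1}N(s)\,ds$ up to an absolutely convergent correction) is sound.

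There is, however, one step whose justification fails as written: the branch $1<\alpha<2$ of condition (c). The substitution $a=1/f(s)$ turns $f(s)\,ds$ into $a^{\alpha-2}w(1/a)\,da$ (with $w(t)=(1-t)^{p-1}/\Gamma(p)$ or $e^{-t}$), so $\int_0^{\infty}f(s)N(1/f(s))\,ds$ reduces to $\int_1^{\infty}a^{-1}N(a)\,da$ only when $\alpha=1$; for $1<\alpha<2$ the correct integral is $\int_1^{\infty}a^{\alpha-2}N(a)w(1/a)\,da$, and your bound $|N(a)|\le c\,a^{1-\alpha}$ then yields only the non-integrable majorant $c\,a^{-1}$. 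The conclusion (no extra condition beyond mean $0$) is still true, but it needs a sharper argument, e.g.\ Tonelli:
\[
\int_1^{\infty}a^{\alpha-2}\,da\int_{|x|>a}|x|\,\nu_\rho(dx)
=\frac{1}{\alpha-1}\int_{|x|>1}\bigl(|x|^{\alpha}-|x|\bigr)\,\nu_\rho(dx)<\infty ,
\]
which gives absolute convergence directly. A smaller point of the same flavour: where you dismiss the contribution of $\{s\colon f_\alpha(s)>1\}$ as ``a fixed compact interval contributing a bounded amount,'' boundedness of the inner integral alone would not suffice because $\int_{|x|\le1}|x|\,\nu_\rho(dx)$ may be infinite; what saves you is that $\int_{1/|x|}^{\infty}t^{-\alpha}e^{-t}\,dt$ decays like $|x|^{\alpha}e^{-1/|x|}$, i.e.\ faster than $|x|^2$, as $|x|\downarrow0$. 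Both repairs are routine, so I would call this a correct proof strategy with two local justifications to be tightened rather than a wrong approach.
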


This is found in Sato (2006) or Theorems 4.2, 4.4 and Propositions 4.6, 5.1 of [S].

\begin{lem}\label{l3b}
Let $-\infty<\alpha<2$ and $p>0$.
The class $K_{p,\alpha}^{\mathrm e}$ $[$resp.\ $K_{\infty,\alpha}^{\mathrm e}]$ 
is the totality of $\mu\in ID$ for which $\nu_{\mu}$ has a radial decomposition
$(\lambda_{\mu}(d\xi),\allowbreak u^{-\alpha-1}\,k_{\xi}^{\mu}(u)du)$ such that 
$k_{\xi}^{\mu}(u)$ 
is measurable in $(\xi,u)$ and, for $\lambda_{\mu}$-a.\,e.\ $\xi$, monotone of
order $p$ $[$resp.\ completely monotone$]$ on $\mathbb{R}_{+}^{\circ}=(0,\infty)$
in $u$.  The classes $K_{p,\alpha}$ and $K_{\infty,\alpha}$, that is, the ranges of 
$\bar\Phi_{p,\alpha}$ and\/ $\Psi_{\alpha}$, are as follows: 
\begin{align*}
K_{p,\alpha}&=\begin{cases} 
K_{p,\alpha}^{\mathrm e} \quad &\text{for }-\infty<\alpha<1,\\
\{\mu\in K_{p,1}^{\mathrm e} \colon \text{$\mu$ has weak mean $0$} \} 
&\text{for }\alpha=1,\\
\{\mu\in K_{p,\alpha}^{\mathrm e} \colon \text{$\mu$ has mean $0$} \} 
&\text{for }1<\alpha<2,
\end{cases} \\
K_{\infty,\alpha}&=\begin{cases} 
K_{\infty,\alpha}^{\mathrm e} \quad &\text{for }-\infty<\alpha<1,\\
\{\mu\in K_{\infty,1}^{\mathrm e} \colon \text{$\mu$ has weak mean $0$} \} &\text{for }\alpha=1,\\
\{\mu\in K_{\infty,\alpha}^{\mathrm e} \colon \text{$\mu$ has mean $0$} \} &\text{for }1<\alpha<2.
\end{cases}
\end{align*}
\end{lem}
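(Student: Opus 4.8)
The plan is to derive the description of $K_{p,\alpha}^{\mathrm e}$ and $K_{\infty,\alpha}^{\mathrm e}$ directly from the L\'evy–It\^o–type analysis of the improper stochastic integrals $\int_0^t \bar f_{p,\alpha}(s)\,dX_s^{(\rho)}$ and $\int_0^t f_{\alpha}(s)\,dX_s^{(\rho)}$, and then obtain $K_{p,\alpha}$ and $K_{\infty,\alpha}$ by tracking exactly which centering is forced. First I would recall from [S] (and Sato (2006)) the formula for the L\'evy measure of $\Phi_f\,\rho$ in terms of $\nu_\rho$: writing $\nu_\rho$ in radial form $(\lambda_\rho(d\xi),\nu_{\rho,\xi}(dr))$, the radial component of $\nu_{\Phi_f\,\rho}$ in direction $\xi$ is the image of $\nu_{\rho,\xi}$ under $r\mapsto f(s)r$ integrated in $s$; for $f=\bar f_{p,\alpha}$ the substitution $s=\bar g_{p,\alpha}(t)$, $t=\bar f_{p,\alpha}(s)$, converts this into the statement that the radial density of $\nu_{\Phi_{p,\alpha}\rho}$ along $\xi$ equals $u^{-\alpha-1}k_\xi(u)$ with $k_\xi$ obtained by applying the integral operator with kernel $\frac1{\Gamma(p)}(1-u/r)_+^{p-1}$ (in the appropriate variable) to the measure $\nu_{\rho,\xi}$. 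The key algebraic point is that this operator has range exactly the functions that are monotone of order $p$ (this is the Gamma-type mixture representation: $k$ is monotone of order $p$ on $(0,\infty)$ iff $k(u)=\int_u^\infty (r-u)^{p-1}\,m(dr)$ for a measure $m$, up to normalization), and likewise for $f_\alpha$ the exponential kernel $e^{-u/r}$ produces exactly the completely monotone functions via Bernstein's theorem. So $\mu\in K_{p,\alpha}^{\mathrm e}$ iff $\nu_\mu$ has radial decomposition $(\lambda_\mu(d\xi),u^{-\alpha-1}k_\xi^\mu(u)du)$ with $k_\xi^\mu$ jointly measurable and, for a.e.\ $\xi$, monotone of order $p$ (resp.\ completely monotone), the "extra" drift $q_t$ being precisely what is subtracted to make the Gaussian and jump parts converge without any constraint on centering.

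Next I would pass from the "e"-classes to $K_{p,\alpha}$ and $K_{\infty,\alpha}$. By definition $K_{p,\alpha}^{\mathrm e}$ allows an arbitrary deterministic drift $q_t$, whereas $K_{p,\alpha}=\mathfrak R(\bar\Phi_{p,\alpha})$ requires convergence of $\int_0^t \bar f_{p,\alpha}(s)\,dX_s^{(\rho)}$ itself. For $\alpha<1$ the function $\bar f_{p,\alpha}$ (resp.\ $f_\alpha$) is integrable enough near $0$ and at $\infty$ that the natural centering is already built in and the location parameter converges automatically for every $\rho\in\mathfrak D(\bar\Phi_{p,\alpha})$; hence $K_{p,\alpha}=K_{p,\alpha}^{\mathrm e}$, using Lemma \ref{l3a} to identify the domain. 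For $1<\alpha<2$, Lemma \ref{l3a} shows $\mathfrak D(\bar\Phi_{p,\alpha})$ forces $\int x\,\rho(dx)=0$; one then checks, by the standard computation of $\gamma_{\Phi_f\,\rho}$ (e.g.\ the formula in Sato (2006)), that this is equivalent to $\Phi_{p,\alpha}\rho$ having mean $0$, and conversely every $\mu\in K_{p,\alpha}^{\mathrm e}$ with mean $0$ arises; this gives the stated intersection with $\{\mu:\ \mu\text{ has mean }0\}$. The case $\alpha=1$ is handled the same way but the relevant invariant of $\rho$ in Lemma \ref{l3a} — existence of $\lim_a\int_1^a s^{-1}ds\int_{|x|>s}x\,\nu_\rho(dx)$ together with $\int x\,\rho(dx)=0$ — translates, via Remark 3.8 and the definition of weak mean in [S], into the condition that $\mu=\Phi_{p,1}\rho$ has weak mean $0$; this is exactly where the notion of weak mean $0$ is needed and is the delicate point of the lemma.

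The main obstacle I anticipate is the $\alpha=1$ bookkeeping: one must show that the logarithmically divergent part of the drift in $\int_0^t \bar f_{p,1}(s)\,dX_s^{(\rho)}$ converges precisely under the weak-mean-$0$ condition on the output, and that no further constraint on the angular mixing family $\{\lambda_\beta^\mu\}$ is hidden. Concretely, this means carefully matching the truncation $\int_{|x|\le a}x\,\nu_\mu(dx)$ in the definition of weak mean against the iterated-integral expression coming from the kernel, and verifying the two notions of "centered at $0$" agree — the computation is routine in spirit but must be done with care because of the non-absolute convergence. Everything else reduces to the mixture/Bernstein characterizations of monotonicity of order $p$ and complete monotonicity, both of which are already available in [S], together with the domain description in Lemma \ref{l3a}.
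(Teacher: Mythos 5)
The paper does not prove this lemma at all: its entire ``proof'' is the citation ``See Theorems 4.18, 5.8, and 5.10 of [S]'', so your proposal cannot be matched against an argument in the text --- it has to be matched against what those cited theorems actually do. Measured that way, your outline is a faithful reconstruction of the route taken in [S]: the polar/radial computation showing that the L\'evy measure of $\bar\Phi_{p,\alpha}\rho$ has radial density $u^{-\alpha-1}k_\xi(u)$ with $k_\xi(u)=\Gamma(p)^{-1}\int_{(u,\infty)}(1-u/r)^{p-1}r^{\alpha}\,\nu_{\rho,\xi}(dr)$ (note the extra weight $r^{\alpha}$ your ``in the appropriate variable'' is hiding), the identification of the range of this kernel operator with the monotone-of-order-$p$ (resp.\ completely monotone, via Bernstein) functions, and the splitting of the centering analysis according to $\alpha<1$, $\alpha=1$, $1<\alpha<2$ using Lemma \ref{l3a}. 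What your sketch buys is transparency about \emph{why} the three regimes differ; what it does not yet supply is precisely the content of the cited theorems: (a) the converse (surjectivity) direction, i.e.\ that every $\mu$ with the stated radial decomposition and the stated moment/centering property actually arises as $\bar\Phi_{p,\alpha}\rho$ for some $\rho$ in the domain --- this needs the invertibility of the kernel transformation on L\'evy measures and a verification that the reconstructed $\rho$ satisfies the domain conditions of Lemma \ref{l3a}; and (b) the $\alpha=1$ identification of the divergent drift with the weak-mean condition on the \emph{output}, which you explicitly defer as ``routine in spirit.'' Also, a small imprecision: the centering $q_t$ has nothing to do with making the Gaussian part converge ($\int_0^\infty f(s)^2\,ds<\infty$ automatically for $\alpha<2$); it only absorbs the drift divergence. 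None of this is a wrong turn, but as written the proposal is an annotated table of contents for [S, Thms.\ 4.18, 5.8, 5.10] rather than an independent proof, which is exactly why the paper cites rather than argues.
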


See Theorems 4.18, 5.8, and 5.10 of [S]. 
Note that if $\mu$ is in $K_{\infty,\alpha}^{\mathrm e}$
or $K_{p,\alpha}^{\mathrm e}$ with $0<\alpha<2$, then $\int_{\mathbb{R}^d} 
|x|^{\beta}\mu(dx)<\infty$ for 
$\beta\in(0,\alpha)$ (Propositions 4.16 and 5.13 of [S]). 
It follows from the lemma above that  $K_{p,\alpha}^{\mathrm e}\supset 
K_{p',\alpha}^{\mathrm e}$ 
and $K_{p,\alpha}\supset K_{p',\alpha}$
for $p<p'$ and that $K_{\infty,\alpha}^{\mathrm e}=
\bigcap_{p>0} K_{p,\alpha}^{\mathrm e}$ and
$K_{\infty,\alpha}=\bigcap_{p>0} K_{p,\alpha}$.   
The notation of $K_{\infty,\alpha}^{\mathrm e}$ and $K_{\infty,\alpha}$ comes 
from this property.

\begin{lem}\label{l3c}
Let $\rho\in L_{\infty}$.\\
{\rm(i)}  Let $0<\alpha<2$.  Then $\int_{\mathbb{R}^d} |x|^{\alpha} \rho(dx)
<\infty$ if and only 
if $\Gamma_{\rho}((0,\alpha])=0$ and $\int_{(\alpha,2)} (\beta-\alpha)^{-1}
\,\Gamma_{\rho}(d\beta)<\infty$.\\
{\rm(ii)}  $\int_{|x|>1} \log |x|\,\rho(dx)<\infty$ if and only if 
$\int_{(0,2)}\beta^{-2}\,\Gamma_{\rho}(d\beta)<\infty$.
\end{lem}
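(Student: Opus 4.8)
The plan is to reduce each moment condition on $\rho$ to an integrability condition on its Lévy measure $\nu_{\rho}$ over $\{|x|>1\}$, and then to evaluate that Lévy-measure integral explicitly from the defining radial representation of distributions in $L_{\infty}$. For the reduction I would use the standard fact (a consequence of the submultiplicative-function criterion, Sato (1999), Theorem 25.3, together with elementary comparisons) that for $\mu\in ID$ and $\alpha>0$ one has $\int_{\mathbb{R}^d}|x|^{\alpha}\,\mu(dx)<\infty$ if and only if $\int_{|x|>1}|x|^{\alpha}\,\nu_{\mu}(dx)<\infty$, and that $\int_{|x|>1}\log|x|\,\mu(dx)<\infty$ if and only if $\int_{|x|>1}\log|x|\,\nu_{\mu}(dx)<\infty$; this applies since $\rho\in L_{\infty}\subset ID$. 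It therefore suffices to compute $\int_{|x|>1}h(|x|)\,\nu_{\rho}(dx)$ for $h(r)=r^{\alpha}$ and for $h(r)=\log r$.

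Inserting the defining representation of the Lévy measure of a distribution in $L_{\infty}$, applying Tonelli's theorem, and using that each $\lambda_{\beta}^{\rho}$ is a probability measure on $S$ while $h(|r\xi|)=h(r)$ for $|\xi|=1$ (so the angular integration disappears), one obtains, for every nonnegative Borel function $h$ on $(0,\infty)$,
\[
\int_{|x|>1}h(|x|)\,\nu_{\rho}(dx)=\int_{(0,2)}\Gamma_{\rho}(d\beta)\int_1^{\infty}h(r)\,r^{-\beta-1}\,dr .
\]
For part (i) take $h(r)=r^{\alpha}$: then $\int_1^{\infty}r^{\alpha-\beta-1}\,dr$ equals $(\beta-\alpha)^{-1}$ when $\beta>\alpha$ and $+\infty$ when $\beta\leq\alpha$, so the double integral is finite exactly when $\Gamma_{\rho}$ gives no mass to $(0,\alpha]$ and $\int_{(\alpha,2)}(\beta-\alpha)^{-1}\,\Gamma_{\rho}(d\beta)<\infty$; combined with the reduction above this is assertion (i). For part (ii) take $h(r)=\log r$: the substitution $u=\log r$ gives $\int_1^{\infty}(\log r)\,r^{-\beta-1}\,dr=\int_0^{\infty}u\,e^{-\beta u}\,du=\beta^{-2}$, finite for every $\beta\in(0,2)$, so $\int_{|x|>1}\log|x|\,\nu_{\rho}(dx)=\int_{(0,2)}\beta^{-2}\,\Gamma_{\rho}(d\beta)$, and finiteness of the left side is equivalent to $\int_{(0,2)}\beta^{-2}\,\Gamma_{\rho}(d\beta)<\infty$, which is assertion (ii).

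I do not anticipate a genuine obstacle here: the whole argument rests on the two displayed elementary integrals together with a correct citation of the $ID$ moment equivalence. The only points that deserve a line of care are the atomic case in (i) --- a point mass of $\Gamma_{\rho}$ exactly at $\beta=\alpha$ makes $\int_1^{\infty}r^{-1}\,dr=\infty$, which is why the condition must read $\Gamma_{\rho}((0,\alpha])=0$ rather than $\Gamma_{\rho}((0,\alpha))=0$ --- and the remark that, since $\rho\in L_{\infty}$ already forces $\int_{(0,2)}(\beta^{-1}+(2-\beta)^{-1})\,\Gamma_{\rho}(d\beta)<\infty$, the weights $(\beta-\alpha)^{-1}$ and $\beta^{-2}$ are harmless near $\beta=2$, so the stated conditions genuinely constrain $\Gamma_{\rho}$ only near $\beta=\alpha$ in (i) and near $\beta=0$ in (ii).
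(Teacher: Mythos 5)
Your proof is correct and for part (ii) coincides exactly with the paper's own computation of $\int_{|x|>1}\log|x|\,\nu_{\rho}(dx)=\int_{(0,2)}\beta^{-2}\Gamma_{\rho}(d\beta)$ combined with the standard $g$-moment equivalence for infinitely divisible laws. For part (i) the paper merely cites Proposition 7.15 of [S]; your direct argument (reduce to $\int_{|x|>1}|x|^{\alpha}\nu_{\rho}(dx)$ and evaluate $\int_1^{\infty}r^{\alpha-\beta-1}dr$, noting it diverges precisely when $\beta\leq\alpha$) is the natural self-contained version of that result, and your remark about the atom at $\beta=\alpha$ correctly accounts for the closed interval $(0,\alpha]$ in the statement.
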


\begin{proof}  Assertion (i) is shown in Proposition 7.15 of [S]. Since
\begin{align*}
&\int_{|x|>1} \log |x|\,\nu_{\rho}(dx)=\int_{(0,2)} \Gamma_{\rho}(d\beta)
\int_S \lambda_{\beta}^{\rho}(d\xi) \int_1^{\infty} (\log |r\xi|)r^{-\beta-1}dr\\
&\qquad=\int_{(0,2)} \Gamma_{\rho}(d\beta) \int_1^{\infty} (\log r)r^{-\beta-1}dr
=\int_{(0,2)} \beta^{-2} \Gamma_{\rho}(d\beta),
\end{align*}
assertion (ii) follows.
\end{proof}

\begin{lem}\label{l3d}
Let $\mu$ and $\rho$ be in $L_{\infty}^{(1,2)}$.  Suppose that 
$\Gamma_{\rho}(d\beta)=(\beta-1) b(\beta)\Gamma_{\mu}(d\beta)$ and
$\lambda_{\beta}^{\rho}=\lambda_{\beta}^{\mu}$ with a nonnegative measurable function
$b(\beta)$ such that $(\beta-1)^{-1} (b(\beta)-1)$ is bounded on $(1,2)$.
Then, $\int_1^a s^{-1}ds\int_{|x|>s}x\nu_{\rho}(dx)$ is convergent in $\mathbb{R}^d$ 
as $a\to\infty$ if and only if $\mu$ has weak mean $m_{\mu}$ for some $m_{\mu}$.
\end{lem}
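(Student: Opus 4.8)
The plan is to reduce the whole statement to a computation with radial decompositions on the interval $(1,2)$, exploiting the hypothesis in the form $b(\beta)=1+(\beta-1)c(\beta)$ with $c(\beta):=(\beta-1)^{-1}(b(\beta)-1)$ bounded and measurable. First I would note that $\Gamma_{\mu}$ is a finite measure on $(1,2)$, that $b$ is consequently bounded on $(1,2)$, and that $\int_{(1,2)}(\beta-1)^{-1}\Gamma_{\rho}(d\beta)=\int_{(1,2)}b(\beta)\,\Gamma_{\mu}(d\beta)<\infty$; by Lemma \ref{l3c}(i) this gives $\int_{\mathbb{R}^d}|x|\,\rho(dx)<\infty$, so that $\int_{|x|>s}x\,\nu_{\rho}(dx)$ is an absolutely convergent $\mathbb{R}^d$-valued integral for every $s>0$ and the iterated integral over $[1,a]$ is well defined for every finite $a$.

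Writing $\bar\xi_{\beta}:=\int_{S}\xi\,\lambda_{\beta}^{\mu}(d\xi)$ and using the radial decomposition of $\nu_{\rho}$ together with $\Gamma_{\rho}(d\beta)=(\beta-1)b(\beta)\Gamma_{\mu}(d\beta)$ and $\lambda_{\beta}^{\rho}=\lambda_{\beta}^{\mu}$, I would compute
\[
\int_{|x|>s}x\,\nu_{\rho}(dx)=\int_{(1,2)}\frac{s^{1-\beta}}{\beta-1}\,\bar\xi_{\beta}\,\Gamma_{\rho}(d\beta)=\int_{(1,2)}s^{1-\beta}b(\beta)\,\bar\xi_{\beta}\,\Gamma_{\mu}(d\beta),
\]
and likewise $\int_{1<|x|\le a}x\,\nu_{\mu}(dx)=\int_{(1,2)}\tfrac{1-a^{1-\beta}}{\beta-1}\,\bar\xi_{\beta}\,\Gamma_{\mu}(d\beta)$. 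For a fixed finite $a$ the bound $s^{-\beta}\le s^{-1}$ on $s\ge1$ and the finiteness of $\Gamma_{\mu}$ justify Fubini's theorem, yielding
\[
\int_{1}^{a}s^{-1}\,ds\int_{|x|>s}x\,\nu_{\rho}(dx)=\int_{(1,2)}\frac{1-a^{1-\beta}}{\beta-1}\,b(\beta)\,\bar\xi_{\beta}\,\Gamma_{\mu}(d\beta).
\]
Inserting $b(\beta)=1+(\beta-1)c(\beta)$ splits the right-hand side as $\int_{1<|x|\le a}x\,\nu_{\mu}(dx)+\int_{(1,2)}c(\beta)\,\bar\xi_{\beta}\,(1-a^{1-\beta})\,\Gamma_{\mu}(d\beta)$, and since the integrand of the last term is bounded in absolute value by $\sup_{(1,2)}|c|$ uniformly in $a>1$ while $1-a^{1-\beta}\to1$ pointwise, dominated convergence shows that this term converges in $\mathbb{R}^d$ as $a\to\infty$. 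Hence $\int_{1}^{a}s^{-1}\,ds\int_{|x|>s}x\,\nu_{\rho}(dx)$ converges as $a\to\infty$ precisely when $\int_{1<|x|\le a}x\,\nu_{\mu}(dx)$ does.

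It then remains to identify convergence of $\int_{1<|x|\le a}x\,\nu_{\mu}(dx)$ with the existence of a weak mean for $\mu$. One direction is immediate from the definition. For the other, if $\lim_{a\to\infty}\int_{1<|x|\le a}x\,\nu_{\mu}(dx)=\ell$ exists, I would rewrite $\int_{|x|\le a}(e^{i\langle z,x\rangle}-1-i\langle z,x\rangle)\nu_{\mu}(dx)$ as $\int_{|x|\le a}(e^{i\langle z,x\rangle}-1-i\langle z,x\rangle 1_{\{|x|\le1\}})\nu_{\mu}(dx)-i\langle z,\int_{1<|x|\le a}x\,\nu_{\mu}(dx)\rangle$ and let $a\to\infty$; comparing with the L\'evy--Khintchine representation involving $\gamma_{\mu}$ shows that the defining cumulant identity holds with $m_{\mu}:=\gamma_{\mu}+\ell$, so $\mu$ has weak mean $m_{\mu}$. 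The only delicate feature is the behaviour near $\beta=1$, where $(\beta-1)^{-1}$ is unbounded and $\Gamma_{\mu}$ need not integrate it; but the argument sidesteps this by comparing the $\rho$-integral with the corresponding $\mu$-integral rather than evaluating either in closed form, so that only the uniformly bounded correction term, controlled by the boundedness of $c$, survives the comparison. Everything else is routine Fubini and dominated convergence.
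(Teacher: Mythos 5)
Your proof is correct and follows essentially the same route as the paper: both compute $\int_1^a s^{-1}ds\int_{|x|>s}x\,\nu_{\rho}(dx)$ and $\int_{1<|x|\le a}x\,\nu_{\mu}(dx)$ via the polar decomposition of the L\'evy measures and show their difference converges using the boundedness of $(\beta-1)^{-1}(b(\beta)-1)$ together with dominated convergence. The only addition is that you spell out the equivalence between convergence of $\int_{1<|x|\le a}x\,\nu_{\mu}(dx)$ and the existence of a weak mean, which the paper leaves implicit in the definition; that step is also correct.
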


\begin{proof}
Notice that $b(\beta)$ is bounded on $(1,2)$ and that
$\int_{|x|>1}|x|\nu_{\rho}(dx)<\infty$ by Lemma \ref{l3c}.
We have
\begin{align*}
&\int_1^a s^{-1}ds\int_{|x|>s}x\nu_{\rho}(dx)=\int_1^a s^{-1}ds
\int_{(1,2)}\Gamma_{\rho}(d\beta)\int_S \xi\lambda_{\beta}^{\rho}(d\xi)\int_s^{\infty}
r^{-\beta}dr\\
&\qquad=\int_{(1,2)}b(\beta) \Gamma_{\mu}(d\beta)\int_S 
\xi\lambda_{\beta}^{\mu}(d\xi)\int_1^a s^{-\beta}ds=I_1\quad\text{(say)}\\
\intertext{and}
&\int_{1<|x|\leq a} x\nu_{\mu}(dx)=\int_{(1,2)}\Gamma_{\mu}(d\beta)\int_S 
\xi\lambda_{\beta}^{\mu}(d\xi)\int_1^a r^{-\beta}dr=I_2\quad\text{(say)}.
\end{align*}
Hence
\[
I_1-I_2=\int_{(1,2)}(b(\beta)-1)\Gamma_{\mu}(d\beta)\int_S 
\xi\lambda_{\beta}^{\mu}(d\xi)\int_1^a r^{-\beta}dr.
\]
Since
\[
\left|(b(\beta)-1)\int_1^a r^{-\beta}dr\right| \leq(\beta-1)^{-1} |b(\beta)-1|
\]
and $\int_1^a r^{-\beta}dr$ tends to $(\beta-1)^{-1}$, $I_1-I_2$ is 
convergent in $\mathbb{R}^d$ as $a\to\infty$. Hence $I_1$ is convergent 
if and only if $I_2$ is convergent.
\end{proof}

\begin{lem}\label{l3e}
Let $f$ and $h$ be locally square-integrable functions on $\mathbb{R}_+$.
Assume that there is $s_0\in(0,\infty)$ such that $h(s)=0$ for $s\geq s_0$
and that $\Phi_h$ is one-to-one.  Then $\Phi_f\Phi_h=\Phi_h\Phi_f$.
\end{lem}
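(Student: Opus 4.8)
The plan is to reduce everything to the behaviour of the integrand functions under composition, using the known description of $\Phi_f$ as a map on characteristic (cumulant) functions. First I would recall that for a locally square-integrable $f$, the cumulant function transforms as $C_{\Phi_f\rho}(z)=\int_0^{\infty-} C_\rho(f(s)z)\,ds$, with the improper integral interpreted as the limit in probability (see [S] and Sato (2006)); when $f$ has bounded support, as $h$ does, this is an honest Riemann integral over $[0,s_0]$ and $\mathfrak D(\Phi_h)=ID$. The composition $\Phi_f\Phi_h$ can then be written, at the level of cumulant functions, as an iterated integral $\int\!\!\int C_\rho\big(f(s)h(u)\,z\big)\,du\,ds$ (first applying $\Phi_h$, whose kernel lives on $[0,s_0]$, then $\Phi_f$), and symmetrically $\Phi_h\Phi_f$ gives $\int\!\!\int C_\rho\big(h(u)f(s)\,z\big)\,ds\,du$. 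Since the scalar product $f(s)h(u)=h(u)f(s)$ is symmetric in the two variables, the two iterated integrals have the same integrand; the whole content of the lemma is that the order of the (one proper, one improper) integration may be interchanged, together with the matching of domains.

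The key steps, in order, are: (1) show $\mathfrak D(\Phi_f\Phi_h)=\mathfrak D(\Phi_h\Phi_f)$; (2) on that common domain, show the two cumulant functions agree. For step (2), fix $\rho$ in the common domain and $z\in\mathbb{R}^d$. Write $\Phi_h\rho=:\sigma$, so $C_\sigma(w)=\int_0^{s_0}C_\rho(h(u)w)\,du$. For the truncated integral, $\int_0^t C_\sigma(f(s)z)\,ds=\int_0^t ds\int_0^{s_0} C_\rho(f(s)h(u)z)\,du$, and by Fubini (the double integral is absolutely convergent on $[0,t]\times[0,s_0]$ because $C_\rho$ is continuous, hence bounded on compacts, and the region is bounded) this equals $\int_0^{s_0}du\int_0^t C_\rho(h(u)f(s)z)\,ds=\int_0^{s_0}C_{\Phi_f^{(t)}\rho}(h(u)z)\,du$, where $\Phi_f^{(t)}$ denotes the truncated mapping with kernel $f\cdot 1_{[0,t]}$. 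Now let $t\to\infty$: the left side converges to $C_{\Phi_f\Phi_h\rho}(z)$ by definition of $\Phi_f\sigma$; on the right, $\Phi_f^{(t)}\rho\to\Phi_f\rho$ in probability (hence weakly, hence in cumulant function pointwise), and one passes the limit through the outer $du$-integral over the bounded interval $[0,s_0]$ using dominated convergence, the domination coming from a uniform bound on $C_{\Phi_f^{(t)}\rho}$ on the compact set $\{h(u)z:u\in[0,s_0]\}$ — here I would invoke the standard fact (from the convergence theory of improper stochastic integrals, as in Sato (2006)) that pointwise convergence of $C_{\Phi_f^{(t)}\rho}$ is locally uniform and the limit is continuous, so the integrands are uniformly bounded. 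This yields $C_{\Phi_f\Phi_h\rho}(z)=\int_0^{s_0}C_{\Phi_f\rho}(h(u)z)\,du=C_{\Phi_h\Phi_f\rho}(z)$.

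For step (1), the domain equality, I would argue: $\rho\in\mathfrak D(\Phi_h\Phi_f)$ means $\rho\in\mathfrak D(\Phi_f)$ and $\Phi_f\rho\in\mathfrak D(\Phi_h)=ID$, i.e.\ simply $\rho\in\mathfrak D(\Phi_f)$. So $\mathfrak D(\Phi_h\Phi_f)=\mathfrak D(\Phi_f)$. For the other side, $\rho\in\mathfrak D(\Phi_f\Phi_h)$ means $\rho\in\mathfrak D(\Phi_h)=ID$ and $\Phi_h\rho\in\mathfrak D(\Phi_f)$; I must show this forces $\rho\in\mathfrak D(\Phi_f)$, and conversely. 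Each $\Phi_f$-domain is characterized (Lemma 3.1 and its analogues, or more generally Sato (2006)) by moment/integrability conditions on the Lévy measure $\nu_\rho$ together with, when $\alpha\ge1$, a centering condition; these conditions are governed by the growth of $f$ near the right end of its support and are stable under the bounded-kernel operation $\Phi_h$. Concretely, $\nu_{\Phi_h\rho}$ is obtained from $\nu_\rho$ by the image construction $\nu_{\Phi_h\rho}(B)=\int_0^{s_0}\nu_\rho(\{x:h(u)x\in B\})\,du$, so $\int_{|x|>1}|x|^\beta\nu_{\Phi_h\rho}(dx)<\infty\iff\int_{|x|>1}|x|^\beta\nu_\rho(dx)<\infty$ (the factor $\int_0^{s_0}|h(u)|^\beta du$ is finite and positive), and similarly $\log$-moments and first moments transform by a finite positive constant; the centering/weak-mean conditions transform linearly and are likewise preserved because $\Phi_h$ is, by hypothesis, a bijection on the relevant class. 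Thus $\Phi_h\rho\in\mathfrak D(\Phi_f)\iff\rho\in\mathfrak D(\Phi_f)$, giving $\mathfrak D(\Phi_f\Phi_h)=\mathfrak D(\Phi_f)=\mathfrak D(\Phi_h\Phi_f)$.

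The main obstacle I anticipate is the rigorous justification of the two limit interchanges: Fubini for the truncated double integral is routine, but passing $t\to\infty$ under the $du$-integral requires a genuine domination statement for the family $\{C_{\Phi_f^{(t)}\rho}\}$ on a fixed compact set, and — more subtly — the domain bookkeeping in step (1) when $\alpha=1$, where the convergence of $\Phi_f\rho$ hinges not on a plain moment condition but on the existence of $\lim_{a\to\infty}\int_1^a s^{-1}ds\int_{|x|>s}x\,\nu_\rho(dx)$ and a mean-zero condition; one must check these delicate conditions are unchanged when $\nu_\rho$ is replaced by its $\Phi_h$-image, which I would do by writing the iterated-integral expression for the relevant quantity and pulling out the finite constant $\int_0^{s_0}h(u)\,du$ (respectively $\int_0^{s_0}|h(u)|\,du$), noting that $\Phi_h$ being one-to-one prevents the degenerate possibility $\int_0^{s_0}h(u)\,du=0$ from causing a loss of information about the centering. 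All remaining manipulations are the elementary identities already displayed above.
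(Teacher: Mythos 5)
Your step (2) — truncating $f$ to $f_t=f\,1_{[0,t]}$, commuting the proper integrals by Fubini, and passing to the limit — is essentially the paper's route (which outsources the truncated commutation and the continuity of $\Phi_h$ under weak convergence to Lemma 3.6 and (3.1) of Maejima--Sato (2009)). But your step (1), the domain bookkeeping, has a genuine gap: you argue that $\Phi_h\rho\in\mathfrak D(\Phi_f)\iff\rho\in\mathfrak D(\Phi_f)$ by appealing to explicit characterizations of $\mathfrak D(\Phi_f)$ via moment, log-moment and centering conditions on $\nu_\rho$. Those characterizations (Lemma 3.1) are available only for the special parametric families $\bar f_{p,\alpha}$, $l_{q,\alpha}$, $f_\alpha$. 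The lemma is stated — and is later applied, inside Lemma 3.6, — for an \emph{arbitrary} locally square-integrable $f$, for which $\mathfrak D(\Phi_f)$ has no such clean description; it is defined only by convergence in probability of $\int_0^t f\,dX_s^{(\rho)}$. So your argument does not establish the statement in the generality in which it is needed. (Even for the special families the $\alpha=1$ case is not a matter of ``pulling out a constant'': replacing $\nu_\rho$ by $\nu_{\Phi_h\rho}$ rescales the truncation level $\{|x|>s\}$ by $h(u)$ inside the iterated integral $\int_1^a s^{-1}ds\int_{|x|>s}x\,\nu(dx)$, which requires a separate estimate.)

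The paper's way around this is soft and worth noting. Forward direction: for $\rho\in\mathfrak D(\Phi_f)$ one has $\Phi_{f_t}\Phi_h\rho=\Phi_h\Phi_{f_t}\rho\to\Phi_h\Phi_f\rho$ weakly, and since convergence in law of $\int_0^t f\,dX_s^{(\Phi_h\rho)}$ (integrals of a fixed L\'evy process over growing intervals) already implies convergence in probability, this alone gives $\Phi_h\rho\in\mathfrak D(\Phi_f)$ — no moment analysis needed. Converse direction: if $\Phi_h\rho\in\mathfrak D(\Phi_f)$, then $\Phi_h\Phi_{f_t}\rho$ converges; injectivity of $\Phi_h$ forces $\int_0^{s_0}h(s)\,ds\neq0$, which yields precompactness of $\{\Phi_{f_t}\rho\}$, and injectivity again identifies all subsequential limits, so $\Phi_{f_t}\rho$ converges and $\rho\in\mathfrak D(\Phi_f)$. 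This is where the one-to-one hypothesis really enters; in your write-up it is invoked only peripherally. A smaller point: in step (2) your dominated-convergence bound on $\{C_{\Phi_{f_t}\rho}(h(u)z)\colon u\in[0,s_0]\}$ via local uniformity is not immediate, since $h$ is only in $L^2[0,s_0]$ and may be unbounded, so $\{h(u)z\}$ need not lie in a compact set.
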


\begin{proof}
Let $f_t(s)=f(s)\,1_{[0,t]}(s)$.  Then $\Phi_{f_t} \Phi_h=\Phi_h
\Phi_{f_t}$ by Lemma 3.6 of Maejima and Sato (2009).  Let 
$\rho\in \mathfrak D(\Phi_f)$.
Then $\Phi_{f_t}\rho\to\Phi_f \rho$ as $t\to\infty$ by the definition of $\Phi_f$.
Hence $\Phi_h \Phi_{f_t}\rho\to\Phi_h \Phi_f \rho$ by (3.1) of Maejima and 
Sato (2009).
It follows that $\Phi_{f_t}\Phi_h\rho\to \Phi_h \Phi_f \rho$.  Since the 
convergence of $\int_0^t f(s)dX_s^{(\Phi_h \rho)}$ in law implies its convergence
in probability, $\Phi_h \rho$ is in $\mathfrak D(\Phi_f)$ and $\Phi_f\Phi_h\rho
=\Phi_h\Phi_f\rho$.  Conversely, suppose that $\rho\in ID$ satisfies $\Phi_h\rho
\in \mathfrak D(\Phi_f)$.  Then $\Phi_h\Phi_{f_t}\rho=\Phi_{f_t}\Phi_h\rho\to 
\Phi_{f}\Phi_h\rho$ as $t\to\infty$.  Looking at (3.8) of  Maejima and Sato (2009), 
we see that $\int_0^{s_0} h(s)\neq0$ from the one-to-one property of $\Phi_h$.
Hence $\{ \Phi_{f_t}\rho\colon t>0\}$ is precompact by the argument in pp.\,138--139
of  Maejima and Sato (2009).  Hence, again from the one-to-one property of $\Phi_h$, 
$\Phi_{f_t}\rho$ is convergent as $t\to\infty$, that is, $\rho\in\mathfrak D(\Phi_f)$.
\end{proof}

\begin{lem}\label{l3f}
Let $f$ be locally square-integrable on $\mathbb{R}_+$.  Suppose that there is 
$\beta\geq0$ such that any $\mu\in\mathfrak R(\Phi_f)$ has L\'evy measure $\nu_{\mu}$
with a radial decomposition $(\lambda_{\mu}(d\xi),\allowbreak 
u^{\beta}l_{\xi}^{\mu}(u)du)$
where $l_{\xi}^{\mu}(u)$ is measurable in $(\xi,u)$ and decreasing on
$\mathbb{R}_{+}^{\circ}$ in $u$.  Then
\[
\mathfrak R_{\infty}(\Phi_f)\subset \mathfrak R_{\infty}(\Lambda_{1,-\beta-1})
=L_{\infty}.
\]
\end{lem}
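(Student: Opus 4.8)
The plan is to compare $\Phi_f$ with the mapping $\Lambda_{1,-\beta-1}$, whose range and iterated ranges are already understood, and to push the containment $\mathfrak R(\Phi_f)\subset\mathfrak R(\Lambda_{1,-\beta-1})$ up through the iterations. First I would recall that $\Lambda_{1,-\beta-1}=\bar\Phi_{1,-\beta-1}$ is the stochastic integral mapping with $\alpha=-\beta-1<0$, so by Lemma \ref{l3b} its range $L_{1,-\beta-1}=K_{1,-\beta-1}=K_{1,-\beta-1}^{\mathrm e}$ is exactly the class of $\mu\in ID$ whose L\'evy measure has a radial decomposition $(\lambda_\mu(d\xi),u^{-\alpha-1}k_\xi^\mu(u)\,du)=(\lambda_\mu(d\xi),u^{\beta}k_\xi^\mu(u)\,du)$ with $k_\xi^\mu(u)$ monotone of order $1$ — that is, decreasing — in $u$ for $\lambda_\mu$-a.e.\ $\xi$. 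The hypothesis on $f$ says precisely that every $\mu\in\mathfrak R(\Phi_f)$ has a L\'evy measure of this same shape. Hence $\mathfrak R(\Phi_f)\subset K_{1,-\beta-1}=\mathfrak R(\Lambda_{1,-\beta-1})$.

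Next I would promote this to all iterates. Since $\alpha=-\beta-1<0$, Lemma \ref{l3a} gives $\mathfrak D(\Lambda_{1,-\beta-1})=ID$, so $\mathfrak D(\Lambda_{1,-\beta-1}^n)=ID$ and $\Lambda_{1,-\beta-1}$ may be iterated freely; moreover by Proposition \ref{p2c} (the case $\alpha\le0$, $q=1$) we have $\mathfrak R_\infty(\Lambda_{1,-\beta-1})=L_\infty$. The claim I want is $\mathfrak R(\Phi_f^n)\subset\mathfrak R(\Lambda_{1,-\beta-1}^n)$ for every $n$, from which $\mathfrak R_\infty(\Phi_f)\subset\mathfrak R_\infty(\Lambda_{1,-\beta-1})=L_\infty$ follows by intersecting. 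Induction on $n$: the base case $n=1$ is the paragraph above. For the inductive step, take $\mu\in\mathfrak R(\Phi_f^{n+1})$, so $\mu=\Phi_f\rho$ with $\rho\in\mathfrak R(\Phi_f^n)$. By the inductive hypothesis $\rho\in\mathfrak R(\Lambda_{1,-\beta-1}^n)$, say $\rho=\Lambda_{1,-\beta-1}^n\sigma$. On the other hand $\mu=\Phi_f\rho\in\mathfrak R(\Phi_f)\subset\mathfrak R(\Lambda_{1,-\beta-1})$, so $\mu=\Lambda_{1,-\beta-1}\tau$ for some $\tau\in ID$. To conclude $\mu\in\mathfrak R(\Lambda_{1,-\beta-1}^{n+1})$ it suffices to show $\tau$ can be taken in $\mathfrak R(\Lambda_{1,-\beta-1}^{n-1})$, equivalently that $\Lambda_{1,-\beta-1}^{-1}\mu$ lies in $\mathfrak R(\Lambda_{1,-\beta-1}^{n-1})$. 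Here I would invoke commutativity: by Lemma \ref{l3e}, applied with $h=\bar f_{1,-\beta-1}$ (which vanishes on $(s_0,\infty)$ for $s_0=1/|\alpha|$ and for which $\Lambda_{1,-\beta-1}$ is one-to-one, the latter being a known property of these mappings), $\Phi_f$ and $\Lambda_{1,-\beta-1}$ commute. Then $\mu=\Phi_f\Lambda_{1,-\beta-1}^n\sigma=\Lambda_{1,-\beta-1}^n\Phi_f\sigma$ — provided $\sigma$ lies in the relevant domains — exhibiting $\mu$ as $\Lambda_{1,-\beta-1}^n$ of the element $\Phi_f\sigma\in\mathfrak R(\Phi_f)\subset\mathfrak R(\Lambda_{1,-\beta-1})$, hence $\mu\in\mathfrak R(\Lambda_{1,-\beta-1}^{n+1})$.

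Actually a cleaner route avoids tracking $\sigma$ through domains: since $\mathfrak D(\Lambda_{1,-\beta-1})=ID$, the map $\Lambda_{1,-\beta-1}$ is defined on all of $ID$, so for any $\rho\in\mathfrak R(\Phi_f^n)\subset\mathfrak R(\Lambda_{1,-\beta-1}^n)$ we may write $\rho=\Lambda_{1,-\beta-1}^n\sigma$ with $\sigma\in ID$, and all iterates $\Lambda_{1,-\beta-1}^k\sigma$ exist. Commutativity from Lemma \ref{l3e} then gives, for $\mu=\Phi_f\rho$, the identity $\mu=\Phi_f\Lambda_{1,-\beta-1}^n\sigma=\Lambda_{1,-\beta-1}^{n}(\Phi_f\sigma)$ once we know $\sigma\in\mathfrak D(\Phi_f)$ and $\Phi_f\sigma\in\mathfrak D(\Lambda_{1,-\beta-1}^n)=ID$; the latter is automatic, and the former follows because Lemma \ref{l3e} shows $\Phi_f$ and $\Lambda_{1,-\beta-1}$ commute as partial maps, so $\Lambda_{1,-\beta-1}^n\sigma\in\mathfrak D(\Phi_f)$ forces $\sigma\in\mathfrak D(\Phi_f)$ by the one-to-one property argument in the proof of Lemma \ref{l3e}. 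Thus $\mu=\Lambda_{1,-\beta-1}^n(\Phi_f\sigma)$ with $\Phi_f\sigma\in\mathfrak R(\Phi_f)\subset\mathfrak R(\Lambda_{1,-\beta-1})$, so $\mu\in\mathfrak R(\Lambda_{1,-\beta-1}^{n+1})$, completing the induction. Taking the intersection over $n$ yields $\mathfrak R_\infty(\Phi_f)\subset\mathfrak R_\infty(\Lambda_{1,-\beta-1})=L_\infty$.

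The main obstacle is the bookkeeping of domains in the commutation argument: Lemma \ref{l3e} gives $\Phi_f\Lambda_{1,-\beta-1}=\Lambda_{1,-\beta-1}\Phi_f$ as an identity of partial operators, and one must be careful that the decompositions $\rho=\Lambda_{1,-\beta-1}^n\sigma$ respect the domains so that $\Phi_f$ can be slid past. Because $\mathfrak D(\Lambda_{1,-\beta-1})=ID$ (Lemma \ref{l3a}, $\alpha<0$) this is harmless on the $\Lambda$ side, and the one-to-one property of $\Lambda_{1,-\beta-1}$ together with the precompactness argument cited in the proof of Lemma \ref{l3e} handles the $\Phi_f$ side; but this is the step that needs to be written carefully rather than waved through.
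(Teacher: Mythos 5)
Your proposal is correct and takes essentially the same route as the paper: the inclusion $\mathfrak R(\Phi_f)\subset \mathfrak R(\Lambda_{1,-\beta-1})$ via the radial-decomposition characterization of that range, commutation $\Phi_f\Lambda_{1,-\beta-1}=\Lambda_{1,-\beta-1}\Phi_f$ from Lemma \ref{l3e} using that $\Lambda_{1,-\beta-1}$ is one-to-one, the induction $\mathfrak R(\Phi_f^n)\subset\mathfrak R(\Lambda_{1,-\beta-1}^n)$, and Proposition \ref{p2c}. The one detail the paper handles that you gloss over is that ``decreasing'' is not the same as ``monotone of order $1$'' (which also requires right-continuity and vanishing at infinity); one must first modify $l_{\xi}^{\mu}$, using $\int_{|x|>1}\nu_{\mu}(dx)<\infty$ to get $l_{\xi}^{\mu}(u)\to0$ as $u\to\infty$ for $\lambda_{\mu}$-a.e.\ $\xi$ — a one-line fix, but it should be said.
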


\begin{proof}
Clearly $l_{\xi}^{\mu}\geq0$ for $\lambda_{\mu}$-a.\,e.\ $\xi$.
Since $\int_{|x|>1} \nu_{\mu}(dx)<\infty$,  we have $\lim_{u\to\infty}
l_{\xi}^{\mu}(u)=0$ for $\lambda_{\mu}$-a.\,e.\ $\xi$.
Hence we can modify $l_{\xi}^{\mu}(u)$ in such a way that $l_{\xi}^{\mu}(u)$
is monotone of order $1$ in $u\in\mathbb{R}_{+}^{\circ}$.  Recall that a function is
monotone of order $1$ on $\mathbb{R}_{+}^{\circ}$ if and only if it is
decreasing, right-continuous, and vanishing at infinity (Proposition 2.11 
of [S]).  Then it follows from Theorem 4.18 or 6.12 of [S]
that 
\begin{equation}\label{3-1}
\mathfrak R(\Phi_f)\subset \mathfrak R(\Lambda_{1,-\beta-1}).
\end{equation}
Let us write $\Lambda=\Lambda_{1,-\beta-1}$ for simplicity.  
We have $\Phi_f\Lambda=\Lambda\Phi_f$ by virtue of Lemma \ref{l3e}, since $\Lambda$
is one-to-one (Theorem 6.14 of [S]).  If $\Phi_f\Lambda^n=\Lambda^n \Phi_f$ 
for some integer $n\geq1$, then
\[
\Phi_f\Lambda^{n+1}=\Phi_f\Lambda\Lambda^n=\Lambda\Phi_f\Lambda^n=\Lambda\Lambda^n 
\Phi_f=\Lambda^{n+1}\Phi_f.
\]
Hence $\Phi_f \Lambda^n=\Lambda^n \Phi_f$ for $n=1,2,\ldots$.  Now we claim that
\begin{equation}\label{3-2}
\mathfrak R(\Phi_f^n)\subset \mathfrak R(\Lambda^n)
\end{equation}
for $n=1,2,\ldots$.  Indeed, this is true for $n=1$ by \eqref{3-1}; if 
\eqref{3-2} is true for $n$, then any $\mu\in \mathfrak R(\Phi_f^{n+1})$ has
expression 
\[
\mu=\Phi_f^{n+1}\rho=\Phi_f\Phi_f^n \rho=\Phi_f\Lambda^n \rho'
=\Lambda^n\Phi_f \rho'=\Lambda^n \Lambda \rho''=\Lambda^{n+1} \rho''
\]
for some $\rho\in\mathfrak D(\Phi_f^{n+1})$, $\rho'\in\mathfrak D(\Lambda^n)$
with $\Phi_f^n\rho=\Lambda^n\rho'$, and $\rho''\in \mathfrak D(\Lambda)$ with 
$\Phi_f\rho'=\Lambda \rho''$, which means \eqref{3-2} for $n+1$.
It follows from \eqref{3-2} that $\mathfrak R_{\infty}(\Phi_f)\subset
\mathfrak R_{\infty}(\Lambda)$.
The equality $\mathfrak R_{\infty}(\Lambda)=L_{\infty}$ is from Proposition \ref{p2c}.
\end{proof}

\noindent {\it Proof of the part related to $\mathfrak R_{\infty}(\Psi_{\alpha})$ in
Theorem \ref{t1a}}.  
The result for $-1\leq\alpha\leq0$
is already known (see Propositions \ref{p2a} and \ref{p2b}).  But the proof
below also includes this case.
First, using Lemma \ref{l3b},
notice that Lemma \ref{l3f} is applicable to $\Phi_f=\Psi_{\alpha}$ 
and $\beta=(-\alpha-1)\lor 0$.

{\it Case 1} ($-\infty<\alpha<0$).  We have $\mathfrak D(\Psi_{\alpha})=ID$ in 
Lemma \ref{l3a}. Let us show that
\begin{equation}\label{A1-1}
\Psi_{\alpha}(L_{\infty})=L_{\infty}.
\end{equation}
Let $\rho\in L_{\infty}$ and $\mu=\Psi_{\alpha}\rho$.  Then for 
$B\in\mathcal B(\mathbb{R}^d)$,
where $\mathcal B(\mathbb{R}^d)$ is the class of Borel sets in $\mathbb{R}^d$,
\begin{align*}
\nu_{\mu}(B)&=\int_0^{\infty} ds\int_{\mathbb{R}^d} 1_B(f_{\alpha}(s)x)\nu_{\rho}(dx)
=\int_0^{\infty} t^{-\alpha-1}e^{-t}dt\int_{\mathbb{R}^d} 1_B(tx)\nu_{\rho}(dx)\\
&=\int_0^{\infty} t^{-\alpha-1}e^{-t}dt\int_{(0,2)} \Gamma_{\rho}(d\beta)
\int_S \lambda_{\beta}^{\rho}(d\xi)\int_0^{\infty} 1_B(tr\xi)r^{-\beta-1} dr\\
&=\int_{(0,2)} \Gamma(\beta-\alpha) \Gamma_{\rho}(d\beta) 
\int_S \lambda_{\beta}^{\rho}(d\xi)\int_0^{\infty} 1_B(u\xi)u^{-\beta-1} du.
\end{align*}
Hence $\mu\in L_{\infty}$ with
\begin{equation}\label{A1-2}
\Gamma_{\mu}(d\beta)=\Gamma (\beta-\alpha) \Gamma_{\rho}(d\beta)\quad\text{and}\quad
\lambda_{\beta}^{\mu}=\lambda_{\beta}^{\rho}.
\end{equation}
Let us show the converse.  
Let $\mu\in L_{\infty}$.  In order to find $\rho\in L_{\infty}$
satisfying $\Psi_{\alpha}\rho=\mu$, it suffices to choose $\Gamma_{\rho}$,
$\lambda_{\beta}^{\rho}$, $A_{\rho}$, and $\gamma_{\rho}$ such that \eqref{A1-2}
holds and
\begin{gather}
\label{A1-3}  A_{\mu}=\int_0^{\infty} f_{\alpha}(s)^2 ds A_{\rho},\\
\label{A1-4} \gamma_{\mu}=\int_0^{\infty-} f_{\alpha}(s)ds\left( \gamma_{\rho}
+\int_{\mathbb{R}^d} x(1_{\{|f_{\alpha}(s)x|\leq1\}} -1_{\{|x|\leq1\}})
\nu_{\rho}(dx)\right)
\end{gather}
(see Proposition 3.18 of [S]).
This choice is possible, because $\inf_{\beta\in(0,2)} \Gamma(\beta-\alpha)>0$,
$\int_0^{\infty}f_{\alpha}(s)ds=\int_0^{\infty} t^{-\alpha}e^{-t}dt=\Gamma(1-\alpha)$,
$\int_0^{\infty}f_{\alpha}(s)^2ds=\int_0^{\infty} t^{1-\alpha}e^{-t}dt=\Gamma(2-\alpha)$,
and
\begin{align*}
&\int_0^{\infty} f_{\alpha}(s)ds \int_{\mathbb{R}^d} |x|\,|1_{\{|f_{\alpha}(s)x|\leq1\}} 
-1_{\{|x|\leq1\}}|\,\nu_{\rho}(dx)\\
&\qquad=\int_0^{\infty} t^{-\alpha}e^{-t}dt \int_{\mathbb{R}^d} |x|\,|1_{\{|tx|\leq1\}} 
-1_{\{|x|\leq1\}}|\,\nu_{\rho}(dx)\\
&\qquad=\int_0^1 t^{-\alpha}e^{-t}dt \int_{1<|x|\leq 1/t} |x|\,\nu_{\rho}(dx)
+\int_1^{\infty} t^{-\alpha}e^{-t}dt \int_{1/t<|x|\leq 1} |x|\,\nu_{\rho}(dx)\\
&\qquad=\int_{|x|>1} |x|\,\nu_{\rho}(dx) \int_0^{1/|x|}t^{-\alpha}e^{-t}dt
+\int_{|x|\leq 1} |x|\,\nu_{\rho}(dx) \int_{1/|x|}^{\infty}t^{-\alpha}e^{-t}dt<\infty,
\end{align*}
since $\int_0^{1/|x|}t^{-\alpha}e^{-t}dt\sim (1-\alpha)^{-1} |x|^{\alpha-1}$ as
$|x|\to\infty$ and $\int_{1/|x|}^{\infty}t^{-\alpha}e^{-t}dt\sim  |x|^{\alpha}
e^{-1/|x|}$ as $|x|\downarrow0$.  Therefore \eqref{A1-1} is true.
It follows that $\Psi_{\alpha}^n(L_{\infty})=L_{\infty}$ for $n=1,2.\ldots$.
Hence $\mathfrak R_{\infty}(\Psi_{\alpha})\supset L_{\infty}$.  On the other hand,
$\mathfrak R_{\infty}(\Psi_{\alpha})\subset L_{\infty}$ by virtue of Lemma \ref{l3f}.

{\it Case 2} ($0\leq\alpha<1$).  Since $\mathfrak D(\Psi_{\alpha})$ is as in Lemma \ref{l3a},
it follows from Lemma \ref{l3c} that
\[
L_{\infty}\cap\mathfrak D(\Psi_{\alpha})=\begin{cases}
\{\rho\in L_{\infty}\colon \int_{(0,2)} \beta^{-2}\,\Gamma_{\rho}(d\beta)<\infty\},\quad
&\alpha=0,\\
\{\rho\in L_{\infty}^{(\alpha,2)}\colon \int_{(\alpha,2)} (\beta-\alpha)^{-1}
\,\Gamma_{\rho}(d\beta)
<\infty\},\quad&0<\alpha<1. \end{cases}
\]
We have
\begin{equation}\label{A2-1}
\Psi_{\alpha}(L_{\infty}\cap\mathfrak D(\Psi_{\alpha})) =L_{\infty}^{(\alpha,2)},
\end{equation}
where $L_{\infty}^{(0,2)}=L_{\infty}$.  Indeed, if $\rho\in L_{\infty}\cap
\mathfrak D(\Psi_{\alpha})$ and $\mu=\Psi_{\alpha}\rho$, then we have $\mu\in 
L_{\infty}^{(\alpha,2)}$ and \eqref{A1-2}, using $\Gamma(\beta-\alpha)=(\beta-\alpha)^{-1}
\Gamma(\beta-\alpha+1)$ for $0\leq\alpha<1$.  Conversely, if $\mu\in 
L_{\infty}^{(\alpha,2)}$, then we can find $\rho\in L_{\infty}\cap
\mathfrak D(\Psi_{\alpha})$ satisfying $\mu=\Psi_{\alpha}\rho$ in the same way as in
Case 1; when $\alpha=0$, we have $\int_{(0,2)} \beta^{-2}
\Gamma_{\rho}(d\beta)<\infty$ since $\Gamma_{\rho}(d\beta)=\beta(\Gamma(\beta+1))^{-1}
\Gamma_{\mu}(d\beta)$ and $\int_{(0,2)} \beta^{-1}\Gamma_{\mu}(d\beta)<\infty$.
Hence \eqref{A2-1} holds.  Now we have
\begin{equation}\label{A2-2}
\Psi_{\alpha}^n(L_{\infty}\cap\mathfrak D(\Psi_{\alpha}^n))=L_{\infty}^{(\alpha,2)}
\end{equation}
for $n=1,2,\ldots$.  Indeed, it is true for $n=1$ by \eqref{A2-1}
and, if \eqref{A2-2} is true for $n$, then
\begin{align*}
L_{\infty}^{(\alpha,2)}&=\Psi_{\alpha}^n(L_{\infty}\cap\mathfrak D(\Psi_{\alpha}^n))=
\Psi_{\alpha}^n(L_{\infty}^{(\alpha,2)}\cap\mathfrak D(\Psi_{\alpha}^n))\\
&=\Psi_{\alpha}^n(\Psi_{\alpha}(L_{\infty}\cap\mathfrak D(\Psi_{\alpha}))
\cap\mathfrak D(\Psi_{\alpha}^n))\\
&=\Psi_{\alpha}^n(\Psi_{\alpha}(L_{\infty}\cap\mathfrak D(\Psi_{\alpha}^{n+1})))
=\Psi_{\alpha}^{n+1}(L_{\infty}\cap\mathfrak D(\Psi_{\alpha}^{n+1})).
\end{align*}
It follows from \eqref{A2-2} that $L_{\infty}^{(\alpha,2)}\subset
\mathfrak R_{\infty}(\Psi_{\alpha})$.  Next we claim that
\begin{equation}\label{A2-3}
\mathfrak R(\Psi_{\alpha})\cap L_{\infty}\subset L_{\infty}^{(\alpha,2)}.
\end{equation}
Let $\mu\in \mathfrak R(\Psi_{\alpha})\cap L_{\infty}$.  Then $\mu$ has a
radial decomposition $(\lambda_{\mu}(d\xi),r^{-\alpha-1}\,k_{\xi}^{\mu}(r)dr)$
with the property stated in Lemma \ref{l3b}. On the other hand,
\begin{align*}
\nu_{\mu}(B)&=\int_{(0,2)} \Gamma_{\mu}(d\beta)\int_S \lambda_{\beta}^{\mu}(d\xi) 
\int_0^{\infty} 1_B(r\xi) r^{-\beta-1} dr\\
&=\int_S \overline\lambda_{\mu}(d\xi) \int_{(0,2)} \Gamma_{\xi}^{\mu}(d\beta)
\int_0^{\infty} 1_B(r\xi) r^{-\beta-1} dr
\end{align*}
for $B\in\mathcal B(\mathbb{R}^d)$, 
as there are a probability measure $\overline\lambda_{\mu}$ on $S$ and a 
measurable family $\{\Gamma_{\xi}^{\mu}\}$ of measures on $(0,2)$
satisfying $\int_{(0,2)}(\beta^{-1}+(2-\beta)^{-1})\Gamma_{\xi}^{\mu}(d\beta)
=\mathrm{const}$ such that $\Gamma_{\mu}(d\beta)\lambda_{\beta}^{\mu}(d\xi)=
\overline\lambda_{\mu}(d\xi)\Gamma_{\xi}^{\mu}(d\beta)$.  Hence, by the uniqueness
in Proposition 3.1 of [S], there is a positive, finite, measurable
function $c(\xi)$ such that $\lambda_{\mu}(d\xi)=c(\xi)\overline\lambda_{\mu}(d\xi)$
and, for $\lambda_{\mu}$-a.\,e.\ $\xi$, $r^{-\alpha-1}\,k_{\xi}^{\mu}(r)dr
=c(\xi)^{-1}\left( \int_{(0,2)} r^{-\beta-1}\Gamma_{\xi}^{\mu}(d\beta)\right)dr$.
Hence $k_{\xi}^{\mu}(r)=c(\xi)^{-1} \int_{(0,2)} r^{\alpha-\beta}\allowbreak 
\Gamma_{\xi}^{\mu}(d\beta)$, a.\,e.\ $r$.  Since $k_{\xi}^{\mu}(r)$ is
completely monotone, it vanishes as $r$ goes to infinity.  
Hence $\Gamma_{\xi}^{\mu}((0,\alpha])=0$ for $\lambda_{\mu}$-a.\,e.\ $\xi$.
Hence $\Gamma_{\mu}((0,\alpha])=0$, that is, $\mu\in L_{\infty}^{(\alpha,2)}$,
proving \eqref{A2-3}.
Now, using Lemma \ref{l3f}, we obtain $\mathfrak R_{\infty}(\Psi_{\alpha})\subset 
\mathfrak R(\Psi_{\alpha})\cap L_{\infty}\subset L_{\infty}^{(\alpha,2)}$.

{\it Case 3} ($\alpha=1$).  Let us show that  
\begin{equation}\label{A3-1}
\Psi_1(L_{\infty}\cap \mathfrak D(\Psi_1))=L_{\infty}^{(1,2)}\cap
\{\mu\in ID\colon \text{weak mean }0\}.
\end{equation}
Let $\rho\in L_{\infty}\cap \mathfrak D(\Psi_1)$, that is, 
$\rho\in L_{\infty}^{(1,2)}$,
$\int_{(1,2)} (\beta-1)^{-1}\Gamma_{\rho}(d\beta)<\infty$, 
$\int_{\mathbb{R}^d} x\rho(dx)=0$,
and $\lim_{a\to\infty} \int_1^a s^{-1} ds \int_{|x|>s} x\nu_{\rho}(dx)$
exists in $\mathbb{R}^d$.  Let $\mu=\Psi_1\rho$.
Then, as in Case 1, $\mu\in L_{\infty}^{(1,2)}$ and 
\eqref{A1-2} holds with $\alpha=1$.  By Lemma \ref{l3b}, $\mu$ has weak mean $0$.
Conversely, let $\mu\in L_{\infty}^{(1,2)}\cap \{\mu\in ID\colon \text{weak mean }0\}$.
Choose $\rho\in L_{\infty}^{(1,2)}$ such that 
$\Gamma_{\rho}(d\beta)=(\Gamma(\beta-1))^{-1}\Gamma_{\mu}(d\beta)$, 
$\lambda_{\beta}^{\rho}=\lambda_{\beta}^{\mu}$, $A_{\rho}=A_{\mu}$,
and $\gamma_{\rho}=-\int_{|x|>1}x\nu_{\rho}(dx)$ (note that $\int_{(1,2)}(\beta-1)^{-1}
\Gamma_{\rho}(d\beta)<\infty$ and hence $\int_{|x|>1}|x|
\nu_{\rho}(dx)<\infty$ by Lemma \ref{l3c}).  Then $\int_{\mathbb{R}^d} x\rho(dx)=0$ 
(see Lemma 4.3 of [S]).  Since $\mu$ has weak mean, $\int_1^a s^{-1}ds\int_{|x|>s}
x\nu_{\rho}(dx)$ is convergent as $a\to\infty$ by application of
Lemma \ref{l3d} with $b(\beta)=1/\Gamma(\beta)$.
Hence $\rho\in\mathfrak D(\Psi_1)$.  We have $\nu_{\Psi_1 \rho}=\nu_{\mu}$,
$A_{\Psi_1 \rho}=A_{\mu}$, and $\Psi_1\rho$ has weak mean $0$.  Among
distributions $\mu'\in ID$ having $\nu_{\mu'}=\nu_{\mu}$ and 
$A_{\mu'}=A_{\mu}$, only one distribution has weak mean $0$.  Hence
$\Psi_1\rho=\mu$.  This proves \eqref{A3-1}.  We have
\begin{equation}\label{A3-2}
\Psi_1^n(L_{\infty}\cap \mathfrak D(\Psi_1^n))=L_{\infty}^{(1,2)}\cap
\{\mu\in ID\colon \text{weak mean }0\},\qquad n=1,2,\ldots
\end{equation}
from \eqref{A3-1} by the same argument as in Case 2.  Hence
\begin{equation}\label{A3-3}
L_{\infty}^{(1,2)}\cap
\{\mu\in ID\colon \text{weak mean }0\} \subset \mathfrak R_{\infty}(\Psi_1).
\end{equation}
Next
\begin{equation}\label{A3-4}
\mathfrak R(\Psi_1)\cap L_{\infty} \subset L_{\infty}^{(1,2)}\cap
\{\mu\in ID\colon \text{weak mean }0\}.
\end{equation}
Indeed, $\mathfrak R(\Psi_1)\cap L_{\infty} \subset L_{\infty}^{(1,2)}$
by the same argument as in Case 2. Any $\mu\in\mathfrak R(\Psi_1)$ has 
weak mean $0$ by Lemma \ref{l3b}.  Now it follows from 
Lemma \ref{l3f} that
\begin{equation}\label{A3-5}
\mathfrak R_{\infty}(\Psi_1) \subset L_{\infty}^{(1,2)}\cap
\{\mu\in ID\colon \text{weak mean }0\}.
\end{equation}

{\it Case 4} ($1<\alpha<2$).  We show that
\begin{equation}\label{A4-1}
\Psi_{\alpha}(L_{\infty}\cap \mathfrak D(\Psi_{\alpha}))=L_{\infty}^{(\alpha,2)}\cap
\{\mu\in ID\colon \text{mean }0\}.
\end{equation}
Let $\rho\in L_{\infty}\cap \mathfrak D(\Psi_{\alpha})$, that is, $\rho\in
L_{\infty}^{(\alpha,2)}$, $\int_{(\alpha,2)} (\beta-\alpha)^{-1}
\Gamma_{\rho}(d\beta)<\infty$, 
and $\int_{\mathbb{R}^d} x\rho(dx)=0$ (Lemmas \ref{l3a} and \ref{l3c}).  
Let $\mu=\Psi_{\alpha}\rho$. Then $\mu\in L_{\infty}^{(\alpha,2)}$ and 
\eqref{A1-2} holds.  Hence $\int_{\mathbb{R}^d} |x|\mu(dx)<\infty$ by
Lemma \ref{l3c} and $\mu$ has mean $0$ by Lemma \ref{l3b}.
Conversely, if $\mu\in L_{\infty}^{(\alpha,2)}\cap \{\mu\in ID\colon \text{mean }0\}$,
then we can find $\rho\in L_{\infty}\cap \mathfrak D(\Psi_{\alpha})$ satisfying 
$\Psi_{\alpha}\rho=\mu$, similarly to Case 3.  Hence \eqref{A4-1} is true.
It follows that 
\[
\Psi_{\alpha}^n(L_{\infty}\cap \mathfrak D(\Psi_{\alpha}^n))=L_{\infty}^{(\alpha,2)}
\cap\{\mu\in ID\colon \text{mean }0\},\qquad n=1,2,\ldots
\]
similarly to Cases 2 and 3.  Hence
\begin{equation}\label{A4-2}
L_{\infty}^{(\alpha,2)}\cap
\{\mu\in ID\colon \text{mean }0\} \subset \mathfrak R_{\infty}(\Psi_{\alpha}).
\end{equation}
We can also prove
\[
\mathfrak R(\Psi_{\alpha})\cap L_{\infty} \subset L_{\infty}^{(\alpha,2)}\cap
\{\mu\in ID\colon \text{mean }0\}
\]
similarly to Cases 2 and 3.  Hence the reverse inclusion of \eqref{A4-2}
follows from Lemma \ref{l3f}.
\qed

\medskip
\noindent {\it Proof of the part related to 
$\mathfrak R_{\infty}(\bar\Phi_{p,\alpha})$ in Theorem \ref{t1a}}.  We assume $p\geq1$.
Since monotonicity of order $p\in[1,\infty)$ implies monotonicity of order
$1$ (Corollary 2.6 of [S]), it follows from Lemma \ref{l3b} that 
Lemma \ref{l3f} is applicable with $\beta=(-\alpha-1)\lor 0$.  Hence
$\mathfrak R_{\infty}(\bar\Phi_{p,\alpha})\subset L_{\infty}$.  If 
$\rho\in L_{\infty}\cap\mathfrak D(\bar\Phi_{p,\alpha})$ and 
$\bar\Phi_{p,\alpha} \rho=\mu$,  then 
$\rho\in L_{\infty}^{(\alpha,2)}$ (understand that 
$L_{\infty}^{(\alpha,2)}=L_{\infty}$ for $\alpha\leq0$) and, noting that
\begin{align*}
\nu_{\mu}(B)&=\int_0^{\infty} ds\int_{\mathbb{R}^d} 1_B(\bar f_{p,\alpha}(s)x)
\nu_{\rho}(dx)\\
&=\frac1{\Gamma(p)} \int_0^1 t^{-\alpha-1}(1-t)^{p-1}dt\int_{\mathbb{R}^d} 
1_B(tx)\nu_{\rho}(dx)\\
&=\frac1{\Gamma(p)}\int_0^1 t^{-\alpha-1}(1-t)^{p-1}dt\int_{(0,2)} 
\Gamma_{\rho}(d\beta)
\int_S \lambda_{\beta}^{\rho}(d\xi)\int_0^{\infty} 1_B(tr\xi)r^{-\beta-1} dr\\
&=\int_{(0,2)} \frac{\Gamma(\beta-\alpha)}{\Gamma(\beta-\alpha+p)} 
\Gamma_{\rho}(d\beta) 
\int_S \lambda_{\beta}^{\rho}(d\xi)\int_0^{\infty} 1_B(u\xi)u^{-\beta-1} du
\end{align*}
and recalling Lemmas \ref{l3a} and \ref{l3c},
we obtain $\mu\in L_{\infty}^{(\alpha,2)}$ with
\begin{equation}\label{B-1}
\Gamma_{\mu}(d\beta)=\frac{\Gamma(\beta-\alpha)}{\Gamma(\beta-\alpha+p)} 
\Gamma_{\rho}(d\beta)\quad\text{and}\quad
\lambda_{\beta}^{\mu}=\lambda_{\beta}^{\rho}.
\end{equation}

Now the proof of assertions (i), (ii), and (iv) can be given in parallel to
the corresponding assertions for $\Psi_{\alpha}$.  Note that, if $-\infty<\alpha<1$, 
then
\[
\int_0^{\infty} \bar f_{p,\alpha}(s)ds \int_{\mathbb{R}^d} |x|
\,|1_{\{|\bar f_{p,\alpha}(s)x|\leq1\}} 
-1_{\{|x|\leq1\}}|\,\nu_{\rho}(dx)<\infty
\]
similarly.  We also use the fact 
that $k_{\xi}^{\mu}(r)$ vanishes at infinity if it is monotone
of order $p\in[1,\infty)$.

For assertion (iii) in the case $\alpha=1$,  we have to find another way, 
as Lemma \ref{l3d} is not applicable if $\beta>1$.  Let us show
\begin{equation}\label{B-2}
\bar\Phi_{p,1}(L_{\infty}\cap \mathfrak D(\bar\Phi_{p,1}))
=L_{\infty}^{(1,2)}\cap\{\mu\in ID
\colon \text{weak mean }0\}.
\end{equation}
Suppose that $\rho\in L_{\infty}\cap \mathfrak D(\bar\Phi_{p,1})$ and 
$\bar\Phi_{p,1}\rho=\mu$.
Then $\rho\in L_{\infty}^{(1,2)}$, $\int_{(1,2)} (\beta-1)^{-1}
\Gamma_{\rho}(d\beta)<\infty$,
$\mu\in L_{\infty}^{(1,2)}$ with \eqref{B-1}, and $\mu$ has weak mean $0$ by
Lemma \ref{l3b}.  Conversely, suppose that $\mu\in L_{\infty}^{(1,2)}$ with
weak mean $0$.  As in [S], let $\mathfrak M^L$ be the class of L\'evy measures
of  infinitely divisible distributions on $\mathbb{R}^d$ and let $\bar\Phi_{p,1}^L$ 
be the transformation of L\'evy measures associated with the mapping $\bar\Phi_{p,1}$.
Define $\Gamma_0(d\beta)=\frac{\Gamma(\beta-1+p)}{\Gamma(\beta-1)} 
\Gamma_{\mu}(d\beta)$.  Then
$\int_{(1,2)} (2-\beta)^{-1} \Gamma_0(d\beta)<\infty$.  Define
\[
\nu_0(B)=\int_{(1,2)} \Gamma_0(d\beta)\int_S \lambda_{\beta}^{\mu}(d\xi)
\int_0^{\infty} 1_B(r\xi) r^{-\beta-1}dr
\]
for $B\in\mathcal B(\mathbb{R}^d)$.  We have $\nu_0\in\mathfrak M^L$.  We see
\begin{align*}
\nu_{\mu}(B)&=\int_{(1,2)} \frac{\Gamma(\beta-1)}{\Gamma(\beta-1+p)} 
\Gamma_0(d\beta) 
\int_S \lambda_{\beta}^{\mu}(d\xi)\int_0^{\infty} 1_B(u\xi)u^{-\beta-1} du\\
&=\int_0^{\infty} ds\int_{\mathbb{R}^d} 1_B(\bar f_{p,1}(s)x)\nu_0(dx)
\end{align*}
from the calculation above.   Since 
$\nu_{\mu}\in\mathfrak M^L$, we have $\nu_0\in \mathfrak D(\bar\Phi_{p,1}^L)$ and
$\bar\Phi_{p,1}^L \nu_0=\nu_{\mu}$.  Then it follows from Theorem 4.10
of [S] that $\nu_{\mu}$ has a radial decomposition 
$(\lambda_{\mu}(d\xi),u^{-2}k_{\xi}^{\mu}(u)du)$ such that $k_{\xi}^{\mu}(u)$
is measurable in $(\xi,u)$ and, for $\lambda_{\mu}$-a.\,e.\ $\xi$,
monotone of order $p$ in $u\in\mathbb{R}_{+}^{\circ}$.  Hence 
$\mu\in\mathfrak R(\bar\Phi_{p,1})$
from Lemma \ref{l3b}.  Since $\bar\Phi_{p,1}^L \nu_0=\nu_{\mu}$
and $\bar\Phi_{p,1}^L$ is one-to-one (Theorem 4.9 of [S]), we
have $\mu=\bar\Phi_{p,1}\rho$ for some $\rho\in\mathfrak D(\bar\Phi_{p,1})$ 
with $\nu_{\rho}=\nu_0$.
It follows that $\rho\in L_{\infty}$.
This finishes the proof of \eqref{B-2}.  
Now we can show \eqref{A3-2}--\eqref{A3-5} with $\bar\Phi_{p,1}$ in 
place of $\Psi_1$ similarly to Case 3 in the preceding proof.
\qed

\medskip
\noindent {\it Proof of the part related to $\mathfrak R_{\infty}(\Lambda_{q,\alpha})$
in Theorem \ref{t1a}}.  
Since we have Proposition \ref{p2c}, it remains only to consider $\Lambda_{1,1}$.
But the assertion for $\mathfrak R_{\infty}(\Lambda_{1,1})$ is obviously true, since
$\Lambda_{1,1}=\bar\Phi_{1,1}$.
\qed


\section{Some examples of $\mathfrak R_{\infty}(\Phi_f)$}

We present some examples of $\Phi_f$ for which the class $\mathfrak R_{\infty}(\Phi_f)$ 
is different from those appearing in Theorem \ref{t1a}.

Define $T_a$, the dilation by $a\in \mathbb{R}\setminus\{0\}$, as $(T_a\mu)(B)
=\int 1_B(ax)\mu(dx)=\mu((1/a)B)$, $B\in\mathcal B(\mathbb{R}^d)$, for measures on 
$\mathbb{R}^d$.  Define $P_t$, the raising to the convolution power $t>0$, in such a way
that, for $\mu\in ID$, $P_t\mu$ is an  infinitely divisible distribution with
characteristic function $\widehat{P_t\mu}(z)=
\widehat\mu(z)^t$.  The mappings $T_a$ (restricted to $ID$),
$P_t$, and $\Phi_f$ are commutative with each other.  
A measure $\mu$ on $\mathbb{R}^d$ is called symmetric if $T_{-1}\mu=\mu$.  
A distribution $\mu$ on $\mathbb{R}^d$ is called shifted symmetric if 
$\mu=\rho*\delta_{\gamma}$ with some symmetric
distribution $\rho$ and some $\delta$-distribution $\delta_{\gamma}$.
Let $ID_{\mathrm{sym}}=ID_{\mathrm{sym}}(\mathbb{R}^d)$  
[resp. $ID_{\mathrm{sym}}^{\mathrm{shift}}
=ID_{\mathrm{sym}}^{\mathrm{shift}}(\mathbb{R}^d)$] 
denote the class of symmetric [resp.\ shifted symmetric]  infinitely divisible 
distributions on $\mathbb{R}^d$.  

\begin{ex}\label{e4a}
Let $f(s)=b1_{[0,a]}(s)-b1_{(a,2a]}(s)$ with $a>0$ and $b\neq0$.  Then 
$\mathfrak R_{\infty}(\Phi_f)=ID_{\mathrm{sym}}$.

Indeed, for $\rho\in ID$,
\[
C_{\Phi_f\rho}(z)=\int_0^a C_{\rho}(bz)ds+\int_a^{2a} C_{\rho}(-bz)ds
=aC_{\rho}(bz)+aC_{\rho}(-bz)=C_{P_a T_b(\rho*T_{-1}\rho)}(z)
\]
for $z\in\mathbb{R}^d$, 
and hence $\Phi_f\rho=P_a T_b(\rho*T_{-1}\rho)$. 
Define $U\rho=P_{1/2}\rho*T_{-1}P_{1/2}\rho$.  Then $U\rho\in ID_{\mathrm{sym}}$
for any $\rho\in ID$.  If $\rho\in ID_{\mathrm{sym}}$, then $U\rho=\rho$.  Hence
$U^n\rho=U\rho$ for $n=1,2,\ldots$.  Since $\Phi_f=P_a T_b P_2 U
=P_{2a} T_b U$, we have $\Phi_f^n=P_{2a}^n T_b^n U=U P_{2a}^n T_b^n$
and $U=\Phi_f^n P_{1/(2a)}^n T_{1/b}^n$.  Hence $\mathfrak R_{\infty}(\Phi_f)
=\mathfrak R(U)=ID_{\mathrm{sym}}$.
\end{ex}

\begin{ex}\label{e4b}
Let $f(s)=b1_{[0,a]}(s)-b1_{(a,a+c]}(s)$ with $a>0$, $c>0$, $a\neq c$, 
and $b\neq0$.  Then 
$\mathfrak R_{\infty}(\Phi_f)=ID_{\mathrm{sym}}^{\mathrm{shift}}$.

To see this, notice that
\[
C_{\Phi_f\rho}(z)=a C_{\rho}(bz)+c C_{\rho}(-bz)=(a+c)(a_1 C_{T_b\rho}(z)
+(1-a_1) C_{T_b\rho}(-z))
\]
for $\rho\in ID$, where $a_1=a/(a+c)$.
That is, $\Phi_f\rho=P_{a+c}T_b(P_{a_1}\rho*P_{1-a_1}T_{-1}\rho)$.  Let us define
$V\rho=P_{a_1}\rho*P_{1-a_1}T_{-1}\rho$.  Note that 
$V$ is the stochastic integral mapping $\Phi_f$ in the case
$a+c=1$ and $b=1$.   We have
\begin{equation}\label{e4b1}
V^n\rho=P_{a_n}\rho*P_{1-a_n}T_{-1}\rho
\end{equation}
for $n=1,2,\ldots$, where $a_n$ is given by $a_n=1-a_1+a_{n-1}(2a_1-1)$.
Indeed, if \eqref{e4b1} is true for $n$, then it is true for $n+1$ in
place of $n$, since
\begin{align*}
V^{n+1}\rho&=P_{a_n}V\rho*P_{1-a_n}T_{-1}V\rho=P_{a_n}V\rho*P_{1-a_n}VT_{-1}\rho\\
&=P_{a_n}(P_{a_1}\rho*P_{1-a_1}T_{-1}\rho)*P_{1-a_n}(P_{a_1}T_{-1}\rho*P_{1-a_1}\rho)
\\
&=P_{a_n a_1+(1-a_n)(1-a_1)}\rho*P_{a_n(1-a_1)+(1-a_n)a_1}T_{-1}\rho\\
&=P_{a_{n+1}}\rho * P_{1-a_{n+1}}T_{-1}\rho.
\end{align*}
We see that $0<a_n<1$ for all $n$.
We have $\Phi_f^n=P_{a+c}^nT_b^n V^n=V^n P_{a+c}^nT_b^n$ and 
$V^n=P_{1/(a+c)}^n T_{1/b}^n \Phi_f^n=\Phi_f^n P_{1/(a+c)}^n T_{1/b}^n$.
Therefore $\mathfrak R(\Phi_f^n)=\mathfrak R(V^n)$ and hence 
$\mathfrak R_{\infty}(\Phi_f)=\mathfrak R_{\infty}(V)$.
Next let us show that
\begin{equation}\label{e4b2}
\mathfrak R_{\infty}(V)=ID_{\mathrm{sym}}^{\mathrm{shift}}.
\end{equation}
If $\rho\in ID_{\mathrm{sym}}$, then $V\rho=\rho$.  Hence $ID_{\mathrm{sym}}
\subset\mathfrak R_{\infty}(V)$.
If $\rho=\delta_{\gamma}$, then $V\rho=\delta_{a_1\gamma}*\delta_{-(1-a_1)\gamma}
=\delta_{(2a_1-1)\gamma}$.
Now $\delta_{\gamma}=V\delta_{(1/(2a_1-1))\gamma}$, since $a_1\neq 1/2$.
Hence all $\delta$-distributions are in $\mathfrak R(V^n)$ and hence in 
$\mathfrak R_{\infty}(V)$.
Since $\mathfrak R_{\infty}(V)$ is closed under convolution, we obtain 
$ID_{\mathrm{sym}}^{\mathrm{shift}}\subset \mathfrak R_{\infty}(V)$.  
To show the converse, assume that
$\mu\in \mathfrak R_{\infty}(V)$.  Then $\mu=V^n\rho_n$ for some $\rho_n\in ID$.
It follows from \eqref{e4b1} that $\nu_{\mu}=a_n \nu_{\rho_n} +(1-a_n)T_{-1}\nu_{
\rho_n}$.  Let $\sigma_n\in ID$ be such that $(A_{\sigma_n},\nu_{\sigma_n},
\gamma_{\sigma_n})=(0,\nu_{\rho_n},0)$.  
It follows from $a_n=1-a_1+a_{n-1}(2a_1-1)$ and from $0<a_n<1$ that $a_n\to1/2$
as $n\to\infty$.  Hence $a_n>1/3$ for all large $n$.  We see that the
set $\{\sigma_n\colon n=1,2,\ldots\}$ is precompact,
since $\nu_{\sigma_n}\leq a_n^{-1}\nu_{\mu}\leq 3\nu_{\mu}$ for all large $n$.
Thus we can choose a subsequence $\{\sigma_{n_k}\}$
convergent to some $\mu'\in ID$.  Since $\int\varphi(x) \nu_{\sigma_{n_k}}(dx)\to
\int\varphi(x)\nu_{\mu'}(dx)$ for any bounded continuous function $\varphi$ which 
vanishes on a neighborhood of the origin and since $a_n\to1/2$,  we obtain
$\nu_{\mu}=(1/2)\nu_{\mu'}+(1/2)T_{-1}\nu_{\mu'}$.  Hence $\nu_{\mu}$ is
symmetric. Hence $\mu*\delta_{-\gamma_{\mu}}$ is symmetric.  It follows that
$\mu\in ID_{\mathrm{sym}}^{\mathrm{shift}}$.  This proves \eqref{e4b2} and therefore 
$\mathfrak R_{\infty}(\Phi_f)=ID_{\mathrm{sym}}^{\mathrm{shift}}$.
\end{ex}

\begin{ex}\label{e4c}
Let $\alpha<0$.  Let $h(s)$ be one of $f_{\alpha}(s)$, $\bar f_{p,\alpha}(s)$, and 
$l_{q,\alpha}(s)$ ($p\geq1$, $q>0$).  Let $s_0=\sup\{s\colon h(s)>0\}$.  Then
$0<s_0<\infty$.  Define 
\[
f(s)=\begin{cases} h(s), \quad & 0\leq s\leq s_0,\\
-h(2s_0-s), \quad & s_0<s\leq 2s_0,\\
0, \quad & s>2s_0.
\end{cases}
\]
Then $\mathfrak R_{\infty}(\Phi_f)=L_{\infty}\cap ID_{\mathrm{sym}}$.

Proof is as follows.  First, recall that $\mathfrak D(\Phi_f)=\mathfrak D(\Phi_h)
=ID$.  We have, for $\rho\in ID$,
\begin{align*}
C_{\Phi_f\rho}(z)&=\int_0^{s_0} C_{\rho}(h(s)z)ds+\int_{s_0}^{2s_0} C_{\rho}
(-h(2s_0-s)z)ds\\
&=\int_0^{s_0} C_{\rho}(h(s)z)ds+\int_0^{s_0} C_{\rho}
(-h(s)z)ds\\
&=C_{\Phi_h\rho}(z)+C_{\Phi_h T_{-1}\rho}(z).
\end{align*}
It follows that $\Phi_f\rho=\Phi_h(\rho*T_{-1}\rho)=\Phi_hP_{2} U\rho
=UP_{2} \Phi_h\rho$, where $U$ is the mapping used in Example \ref{e4a}.
It follows that $\Phi_f^n=\Phi_h^n P_{2}^n U=U P_{2}^n \Phi_h^n$
for $n=1,2,\ldots$.  Hence $\mathfrak R(\Phi_f^n)\subset \mathfrak R(\Phi_h^n)\cap 
ID_{\mathrm{sym}}$.  Conversely, assume that $\rho\in \mathfrak R(\Phi_h^n)\cap 
ID_{\mathrm{sym}}$.  Then $\mu=\Phi_h^n \rho$ for some $\rho$ and 
$T_{-1}\mu=\Phi_h^n T_{-1}\rho$.  Since $\Phi_h$ is one-to-one (see 
[S]), we have $\rho=T_{-1}\rho$. Hence $\Phi_f^n\rho=\Phi_h^n P_{2}^n U
\rho=\Phi_h^n P_{2}^n \rho=P_{2}^n\mu$ and thus 
$\mu=\Phi_f^n P_{1/2}^n \rho \in \mathfrak R(\Phi_f^n)$.  In conclusion,
$\mathfrak R(\Phi_f^n)=\mathfrak R(\Phi_h^n)\cap ID_{\mathrm{sym}}$ and hence
$\mathfrak R_{\infty}(\Phi_f)=\mathfrak R_{\infty}(\Phi_h)\cap ID_{\mathrm{sym}}
=L_{\infty}\cap ID_{\mathrm{sym}}$.
\end{ex}

\begin{ex}\label{e4d}
Let $h(s)$ and $s_0$ be as in Example \ref{e4c}.  Define 
\[
f(s)=\begin{cases} h(s_0-s), \quad & 0\leq s\leq s_0,\\
h(s-s_0), \quad & s_0<s\leq 2s_0,\\
-h(3s_0-s), \quad & 2s_0<s\leq 3s_0,\\
0, \quad & s>3s_0.
\end{cases}
\]
Then $\mathfrak R_{\infty}(\Phi_f)=L_{\infty}\cap ID_{\mathrm{sym}}^{\mathrm{shift}}$.

To see this, notice that 
\begin{align*}
C_{\Phi_f\rho}(z)&=\int_0^{s_0} C_{\rho}(h(s_0-s)z)ds+\int_{s_0}^{2s_0} C_{\rho}
(h(s-s_0)z)ds\\
&\quad +\int_{2s_0}^{3s_0} C_{\rho}(-h(3s_0-s)z)ds\\
&=\int_0^{s_0} C_{\rho}(h(s)z)ds+\int_0^{s_0} C_{\rho}(h(s)z)ds+
\int_0^{s_0} C_{\rho}(-h(s)z)ds\\
&=2C_{\Phi_h\rho}(z)+C_{\Phi_h \rho}(-z)\\
&=3(\tfrac23 C_{\Phi_h\rho}(z) +\tfrac13 C_{\Phi_h\rho}(-z)).
\end{align*}
Hence $\Phi_f\rho=P_3 V\Phi_h\rho$, where $V\rho=P_{2/3}\rho*P_{1/3}T_{-1}\rho$.
This mapping $V$ is a special case of $V$ in Example \ref{e4b} with
$a_1=2/3$.  Hence \eqref{e4b1} holds with $a_n=2^{-1}(1+3^{-n})$ and
$1-a_n=2^{-1}(1-3^{-n})$.  Now $\Phi_f^n=P_3^n V^n \Phi_h^n=\Phi_h^n
P_3^n V^n=V^n P_3^n \Phi_h^n$.  Hence $\mathfrak R(\Phi_f^n)\subset 
\mathfrak R(\Phi_h^n)\cap \mathfrak R(V^n)$.  It follows that 
$\mathfrak R_{\infty}(\Phi_f)\subset 
\mathfrak R_{\infty}(\Phi_h)\cap \mathfrak R_{\infty}(V)=L_{\infty}\cap 
ID_{\mathrm{sym}}^{\mathrm{shift}}$ from Theorem \ref{t1a} and \eqref{e4b2}.
Let us also show the converse inclusion $L_{\infty}\cap 
ID_{\mathrm{sym}}^{\mathrm{shift}}\subset \mathfrak R_{\infty}(\Phi_f)$.  It is enough
to show 
\begin{equation}\label{e4d1}
\mathfrak R(\Phi_h^n)\cap ID_{\mathrm{sym}}^{\mathrm{shift}}\subset \mathfrak R(\Phi_f^n).
\end{equation}
For any $\gamma\in\mathbb{R}^d$ we have
\[
C_{\Phi_h \delta_{\gamma}}(z)=\int_0^{s_0} C_{\delta_{\gamma}}(h(s)z)ds=i\int_0^{s_0}
\langle\gamma,h(s)z\rangle ds=ic\langle\gamma,z\rangle =C_{\delta_{c\gamma}}(z),
\]
where $c=\int_0^{s_0} h(s)ds>0$.  That is, $\Phi_h\delta_{\gamma}=\delta_{c\gamma}$.
Hence $\Phi_f\delta_{\gamma}=P_3\Phi_h V\delta_{\gamma}=P_3\Phi_h(\delta_{(2/3)\gamma}
*\delta_{-(1/3)\gamma})=\Phi_h\delta_{\gamma}=\delta_{c\gamma}$. Hence 
$\Phi_f^n \delta_{\gamma}
=\delta_{c^n\gamma}$ and $\delta_{\gamma}=\Phi_f^n \delta_{c^{-n}\gamma}$. Hence all
$\delta$-distributions are in $\mathfrak R(\Phi_f^n)$.  Similarly all
$\delta$-distributions are in $\mathfrak R(\Phi_h^n)$. 
Let $\mu\in \mathfrak R(\Phi_h^n)\cap ID_{\mathrm{sym}}^{\mathrm{shift}}$.  Then
$\mu*\delta_{\gamma}\in \mathfrak R(\Phi_h^n)\cap ID_{\mathrm{sym}}$ for some $\gamma$.
Letting $\mu'=\mu*\delta_{\gamma}$, we have $\mu'=\Phi_h^n\rho'$ for some $\rho'$.  
Since $\mu'=T_{-1}\mu'=\Phi_h^n T_{-1}\rho'$, we have $\rho'=T_{-1}\rho'$ from
the one-to-one property of $\Phi_h$.  Thus $V^n\rho'=\rho'$ and $\Phi_f^n\rho'=
\Phi_h^n P_3^n\rho'=P_s^n\mu'$.  Hence $\mu'=P_{1/3}^n \Phi_f^n \rho'
=\Phi_f^n P_{1/3}^n \rho'\in \mathfrak R(\Phi_f^n)$.  It follows that 
$\mu=\mu'*\delta_{-\gamma}\in \mathfrak R(\Phi_f^n)$. This proves \eqref{e4d1}.
Hence $\mathfrak R_{\infty}(\Phi_f)=L_{\infty}\cap ID_{\mathrm{sym}}^{\mathrm{shift}}$.
\end{ex}

\begin{ex}\label{e4e}
Let $b>1$.  Let $f(s)=b1_{[0,1]}(s)+1_{(1,2]}(s)$.  Let $L_{\infty}(b)$
be the class mentioned in Section 2.  
Then $L_{\infty}(b)
\subset \mathfrak R_{\infty}(\Phi_f)\subsetneqq ID$.  We do not know whether 
$\mathfrak R_{\infty}(\Phi_f)$ equals $L_{\infty}(b)$. 

Let us show that $L_{\infty}(b)
\subset \mathfrak R_{\infty}(\Phi_f)$.  For $0<\alpha\leq2$ define 
$\mathfrak S_{\alpha}(b)=\mathfrak S_{\alpha}(b,\mathbb{R}^d)$ as follows: 
$\rho\in\mathfrak S_{\alpha}(b)$
if and only if $\rho$ is a $\delta$-distribution or a non-trivial $\alpha$-semi-stable 
distribution with $b$ as a span, that is,
\[
\mathfrak S_{\alpha}(b)=\{\rho\in ID\colon P_{b^\alpha}\rho=T_b\rho*\delta_{\gamma}
\text{ for some }\gamma \in\mathbb{R}^d\}.
\]
We have $C_{\Phi_f\rho}(z)=C_{\rho}(bz)+C_{\rho}(z)$ for $\rho\in ID$, that
is, $\Phi_f\rho=T_b\rho*\rho$.  If $\rho\in\mathfrak S_{\alpha}(b)$ with 
$P_{b^\alpha}\rho=T_b\rho*\delta_{\gamma}$, then $\mu=\Phi_f\rho$ satisfies
$\mu=T_b\rho*\rho=P_{b^{\alpha}}\rho*\delta_{-\gamma}*\rho=P_{b^\alpha+1}
\rho*\delta_{-\gamma}$ and $\mu\in\mathfrak S_{\alpha}(b)$.
If $\mu\in\mathfrak S_{\alpha}(b)$ with 
$P_{b^\alpha}\mu=T_b\mu*\delta_{\gamma'}$, then $\mu=\Phi_f\rho$ for 
$\rho=P_{1/(b^\alpha +1)}
(\mu*\delta_{(1/(b+1))\gamma'})\in \mathfrak S_{\alpha}(b)$.  Therefore 
$\Phi_f(\mathfrak S_{\alpha}(b))=\mathfrak S_{\alpha}(b)$.  Hence  
$\mathfrak S_{\alpha}(b)\subset \mathfrak R(\Phi_f^n)$  for $0<\alpha\leq 2$ and
$n=1,2,\ldots$.
It follows from Proposition 3.2 of Maejima and Sato (2009) that $\mathfrak R(\Phi_f^n)$
is closed under convolution and weak convergence. Hence
$L_{\infty}(b)\subset \mathfrak R(\Phi_f^n)$ and thus 
$L_{\infty}(b)\subset \mathfrak R_{\infty}(\Phi_f)$.
In order to show $\mathfrak R_{\infty}(\Phi_f)\subsetneqq ID$, let $\mu$
be such that $\nu_{\mu}=\delta_a$ with $a\neq 0$.  Suppose that 
$\mu=\Phi_f\rho$ for some $\rho\in ID$.  Then $\nu_{\mu}=T_b\nu_{\rho}
+\nu_{\rho}$.  If $\nu_{\rho}\neq0$, then the support of $\nu_{\rho}$
contains at least one point $a'\neq0$ and hence the support of 
$\nu_{\mu}$ contains at least two points $\{a',ba'\}$, which is
absurd.  If $\nu_{\rho}=0$, then $\nu_{\mu}=0$, which is also absurd.
Therefore $\mu\not\in\mathfrak R(\Phi_f)$ and hence 
$\mu\not\in\mathfrak R_{\infty}(\Phi_f)$.
\end{ex}


\section{Concluding remarks}

The limit class $\mathfrak R_{\infty}(\Phi_f)$ is not known in many cases.
For instance it is not known for the following choices of $f(s)$:
$l_{q,1}(s)$ with $q\in(0,1)\cup(1,\infty)$ in [S];
$\bar f_{p,\alpha}(s)$ with $p\in(0,1)$ and $\alpha\in(-\infty,2)$ in [S];
$\cos (2^{-1}\pi s)$ in Maejima et al.\ (2011a); $e^{-s}\,1_{[0,c]}(s)$ with 
$c\in(0,\infty)$ in Pedersen and Sato (2005); $G_{\alpha,\beta}^* (s)$ with
$\alpha\in[1,2)$ and $\beta>0$ satisfying $\alpha=1+n\beta$ for some $n=0,1,\ldots$ 
in Maejima and Ueda (2010b). Another instance is $\Phi_f=\Upsilon^{\alpha}$ with 
$\alpha\in(0,1)$ related to the Mittag-Leffler function, introduced
in Barndorff-Nielsen and Thorbj\o rnsen (2006).

Consider, as in Sato (2007), a stochastic integral mapping
\[
\Phi_f\,\rho=\mathcal L\left(\int_{0+}^a f(s)dX_s^{(\rho)}\right)
\]
with $0<a<\infty$ for a function $f(s)$ locally square-integrable on
the interval $(0,a]$ and study $\mathfrak R_{\infty}(\Phi_f)=\bigcap_{n=1}^{\infty} 
\mathfrak R(\Phi_f^n)$. Under appropriate choices of $f$ we obtain 
$\mathfrak R_{\infty}(\Phi_f)$ equal to $L_{\infty}^{(0,\alpha)}\cap ID_0$ with
$\alpha\in(1,2)$, $L_{\infty}^{(0,\alpha)}\cap ID_0\cap\{\mu\in ID\colon 
\mu\text{ has drift }0\}$ with $\alpha\in(0,1)$, or a certain subclass of 
$L_{\infty}^{(0,1)}\cap ID_0$.  This will be shown in a
forthcoming paper.

It is an interesting problem what other classes can appear as 
$\mathfrak R_{\infty}(\Phi_f)$.

\end{document}